\theoremstyle{plain} %text of this environment is typesetted in italics
\newtheorem{theorem}{\indent\sc Theorem}[section]
\newtheorem{lemma}[theorem]{\indent\sc Lemma}
\newtheorem{corollary}[theorem]{\indent\sc Corollary}
\newtheorem{proposition}[theorem]{\indent\sc Proposition}
\newtheorem{conjecture}[theorem]{\indent\sc Conjecture}
\theoremstyle{definition} 
\newtheorem{definition}[theorem]{\indent\sc Definition}
\newtheorem{remark}[theorem]{\indent\sc Remark}
\newtheorem{example}[theorem]{\indent\sc Example}
\def\address#1#2{\begingroup
\noindent\parbox[t]{7.8cm}{%
\small{\scshape\ignorespaces#1}\par\vskip1ex
\noindent\small{\itshape E-mail address}%
\/: #2\par\vskip4ex}\hfill%
\endgroup}%
\title{\uppercase{Primitive Collections and Toric Varieties}} 
\author{
\bigskip \\
\textsc{David A. Cox and Christine von Renesse} %names of authors
}
\date{} %leave empty
\newcommand{\R}{\mathbb{R}}
\newcommand{\C}{\mathbb{C}}
\newcommand{\Z}{\mathbb{Z}}
\newcommand{\Q}{\mathbb{Q}}
\renewcommand{\P}{\mathbb{P}}
\renewcommand{\/}{/\negthickspace/}
\newcommand{\PL}{\mathrm{PL}}
\newcommand{\CPL}{\mathrm{CPL}}
\newcommand{\pic}{\mathrm{Pic}}
\newcommand{\nef}{\mathrm{Nef}}
\newcommand{\NE}{\mathrm{NE}}
\newcommand{\Span}{\mathrm{span}}
\newcommand\res[1]{{\lower1pt\hbox{$|$}}_{\raise.5pt\hbox{${\scriptstyle #1}$}}}
\def\p{{\rho}}
\def\b{{\beta}}
\def\fan{{\Sigma}}
\def\pos{{\mathrm{Cone}}}
\begin{document}

\maketitle

%%%%%%%%%%%%%%% footnote %%%%%%%%%%%%%%%%
\footnote{ %2000 MSC numbers
2000 \textit{Mathematics Subject Classification}.
Primary 14M25
}
\footnote{ %key words and phrases
\textit{Key words and phrases}.
Toric varieties, primitive collections.
}
%\footnote{ %acknowledgment of support etc. if any
%$^{*}$Thanks.
%}
%%%%%%%%%%%%%%%%%%%%%%%%%%%%%%%%%%%%%%%%%
\begin{abstract}
This paper studies Batyrev's notion of primitive collection.  We use
primitive collections to characterize the nef cone of a
quasi-projective toric variety whose fan has convex support, a result
stated without proof by Batyrev in the smooth projective case.  When
the fan is non-simplicial, we modify the definition of primitive
collection and explain how our definition relates to primitive
collections of simplicial subdivisons.  The paper ends with an open
problem. 
\end{abstract}

\section*{Introduction}

Let $X$ be the the toric variety of a fan $\fan$.  When $X$ is smooth
and projective, Batyrev \cite{batyrev} defines a collection
$\{\p_1,\dots,\p_k\}$ of 1-dimensional cones of $\fan$ to be a
\emph{primitive collection} provided it does not span a cone of
$\fan$ but every proper subset does.  Each primitive collection gives
a \emph{primitive inequality}, and one of the nice results of
\cite{batyrev} states that the nef cone of $X$ is defined by the
primitive inequalities.  For a proof, Batyrev cited the work of Oda
and Park \cite{odapark} and Reid \cite {reid}, without giving details.

The survey article \cite{cox} by the first author notes that Batyrev's
theorem applies to simplicial projective toric varieties.  Casagrande
\cite{casagrande} and Sato \cite{sato} explain how primitive
collections relate to Reid's paper \cite{reid}, and Kresch
\cite{kresch} gives a proof in the smooth case.  However, a complete
proof of Batyrev's result in the simplicial case has never appeared in
print.  In this paper, we give two proofs of Batyrev's theorem, one
based on \cite{kresch} and the other on \cite{reid}.  We also extend
the definition of primitive collection to the non-simplicial case and
show that primitive collections still have the required properties.
Our results apply to all quasi-projective toric varieties whose fans
have convex support of maximal dimension.

\subsection*{Notation} We use standard notation and terminology for
toric varieties.  Let $N$ and $M = \mathrm{Hom}_\Z(N,\Z)$ be dual
lattices of rank $n$ with associated real vector spaces $N_\R =
N\otimes_\Z \R$ and $M_\R = M\otimes_\Z \R$.

Let $X = X_\fan$ be a toric variety of a fan $\fan$ in $N_\R \cong
\R^n$.  We always assume that the support $|\fan|$ of $\fan$ is
convex of dimension $n$.  Hence all maximal cones have dimension $n$.

Given $\fan$, $\fan(k)$ denotes the set of $k$-dimensional cones of
$\fan$, and $\fan(k)^\circ$ is the subset of $\fan(k)$ consisting of
$k$-dimensional cones not lying on the boundary of $|\fan|$.  An
\emph{interior wall} is an element of $\fan(n-1)^\circ$.

We use the convention that $\p$ will denote both an element of
$\fan(1)$ and its primitive generator in $N$.  The torus-invariant divisor
associated to $\rho$ is denoted $D_\rho$.

Also recall that a piecewise-linear function $\phi$ can be represented
by giving $m_{\sigma} \in M_\R$ for each $\sigma \in \fan(n)$, i.e.,
$\phi(u) = \langle m_{\sigma}, u \rangle$ if $u \in \sigma$.  We
define $\PL(\fan)$ as the vector space of all piecewise-linear
functions on $\fan$.  The function $\phi$ is well-defined in
$\PL(\fan)$ if and only if the following statement holds:\ if $\tau$
is an interior wall and $\sigma$, $\sigma'$ are the $n$-dimensional
cones on each side of $\tau$, then $m_{\sigma} - m_{\sigma'} \in
\tau^{\perp}$.  The support function $\phi$ of a torus-invariant
Cartier divisor $D$ satisfies $D = \sum_\rho \phi(\rho) D_\rho$.  Note
the absence of minus signs.

For us, $\phi$ is \emph{convex} if and only $\phi(u) + \phi(v) \ge
\phi(u+v)$ for all $u,v \in |\fan|$.  We also define $\CPL(\fan)
\subset \PL(\fan) $ to be the cone consisting of all convex
piecewise-linear functions on $\fan$.  A function $\phi \in \PL(\fan)$
is \emph{strictly convex} when $\phi(u) + \phi(v) > \phi(u+v)$ for all
$u,v \in |\fan|$ not lying in the same cone of $\fan$.  The toric
variety $X$ is quasi-projective if and only if there exists a strictly
convex $\phi \in \PL(\fan)$.  When this happens, the interior of
$\CPL(\fan)$ is nonempty and consists of all strictly convex
piecewise-linear functions in $\PL(\fan)$.

\subsection*{Outline of the paper}  In Section 1 we give a new
definition of primitive collection and state our main theorem.  We
also recall the nef and Mori cones and review the description of the
Mori cone in terms of the wall relations coming from interior walls.
In Section 2 we prove Batyrev's theorem in the simplicial case, and
then in Section 3 we treat the non-simplicial case.  This section also
studies how primitive collections for $\fan$ relate to primitive
collections for a simplicial subdivision $\fan'$ of $\fan$.  The final
section of the paper explores an open problem dealing with the
quasi-projective hypothesis.

\section{Primitive Collections and the Main Theorem}
\label{section2}

\subsection{Primitive Collections}
The nef cone $\nef(X)$ of $X$ is the quotient of the cone $\CPL(\fan)
\subset \PL(\fan)$ by all linear functions on $\fan$.  Thus
\[
\nef(X) \subset \pic(X)_\R = \pic(X)\otimes_\Z \R. 
\]

Here is the central definition of our paper.

\begin{definition}
\label{pcdef}
A subset $\{ \p_1,\dots, \p_k \} \subset \fan(1)$ is called a
\emph{primitive collection} for $\fan$ if $\{ \p_1,\dots, \p_k \}$ is not
contained in a single cone of $\fan$ but every proper subset is.
\end{definition}

\begin{remark}
In the smooth projective case, Batyrev defined $\{ \p_1,\dots, \p_k \}
\subset \fan(1)$ to be a primitive collection when $\{ \p_1,\dots,
\p_k \}$ does not generate a cone of $\fan$ but every proper
subset does.  When $\fan$ is smooth or more generally simplicial, this
is clearly equivalent to Definition~\ref{pcdef}.
\end{remark}

\begin{definition}
Let $\{ \p_1, \dots, \p_k\}$ be a primitive collection.  We say that
$\phi \in \PL(\fan)$ satisfies the \emph{primitive inequality for}
$\{\p_1, \dots, \p_k\}$ if
\[
\phi(\p_1) + \cdots + \phi(\p_k) \ge \phi(\p_1 + \cdots + \p_k ). 
\]
\end{definition}

If $\phi \in \PL(\fan)$ is convex, i.e., $\phi \in \CPL(\fan)$, then
$\phi$ clearly satisfies the primitive
inequality for every primitive collection.  In other words,
\begin{equation}
\label{cone_inclusion}
\begin{aligned}
\CPL(\Sigma) \subset \big\{& \phi \in \PL(\fan) \mid \phi(\p_1) +
\cdots + \phi(\p_k) \ge \phi(\p_1 + \cdots + \p_k )\\ 
&\text{for all primitive collections}\ \{ \p_1,\dots, \p_k \} \
\text{for} \ \fan\big\}.
\end{aligned}
\end{equation}
The main result of our paper is that the inclusion
\eqref{cone_inclusion} is in fact an equality, i.e., the nef cone is
defined by the primitive inequalities.  Here is the precise statement.

\begin{theorem}[Main Theorem]
\label{mainthm}
Let $X$ be a quasi-projective toric variety coming from the fan
$\fan$ in $N_\R \cong \R^n$.  If $|\fan|$ is convex of
dimension $n$, then
\begin{align*}
\CPL(\Sigma) = \big\{ &\phi \in \PL(\fan) \mid \phi(\p_1) +
\cdots + \phi(\p_k) \ge \phi(\p_1 + \cdots + \p_k )\\ 
&\text{\rm for all primitive collections}\ \{ \p_1,\dots, \p_k \}
\ \text{\rm for}\ \fan \big\}. 
\end{align*}
\end{theorem}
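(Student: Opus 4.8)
The inclusion $\CPL(\fan) \subseteq \{\phi \mid \phi \text{ satisfies all primitive inequalities}\}$ is exactly \eqref{cone_inclusion}, so the plan is to prove the reverse inclusion: every $\phi \in \PL(\fan)$ satisfying all primitive inequalities is convex. My first step would be to replace global convexity by a local condition at interior walls. Since $|\fan|$ is convex of dimension $n$, I would invoke the local-to-global criterion that $\phi$ is convex on $\fan$ if and only if it is convex across every interior wall $\tau \in \fan(n-1)^\circ$; that is, if $\sigma,\sigma'$ are the two maximal cones meeting along $\tau$, then $\phi$ bends the correct way across $\tau$. The convexity of the support and the maximality of the dimension are precisely what make this reduction valid, since on a non-convex or lower-dimensional support the interior walls do not control convexity.

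Next I would make the wall condition explicit. For an interior wall $\tau$ with adjacent maximal cones $\sigma,\sigma'$, in the simplicial case $\sigma = \pos(\rho_1,\dots,\rho_n)$ and there is a wall relation $b_0\rho_0 + \sum_{i=1}^n b_i\rho_i = 0$, where $\rho_0$ is the ray of $\sigma'$ not in $\sigma$ and $b_0>0$; convexity of $\phi$ across $\tau$ is then equivalent to the single inequality obtained by pairing $\phi$ against this wall relation, exactly as the primitive inequality is the pairing of $\phi$ against a primitive relation. Using the description (reviewed in Section~1) that the wall relations generate the Mori cone $\NE(X)$ and that $\nef(X)$ is dual to $\NE(X)$, proving the theorem amounts to showing that the primitive inequalities imply all wall inequalities. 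Dually, this is the statement that every interior wall relation lies in the cone of $N_1(X)_\R$ generated by the primitive relations $\rho_1 + \cdots + \rho_k - \sum_{\rho \in \sigma_P(1)} c_\rho\,\rho$ attached to primitive collections $P = \{\rho_1,\dots,\rho_k\}$, where $\sigma_P$ is the minimal cone containing $\rho_1 + \cdots + \rho_k$.

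The heart of the argument, and the step I expect to be the main obstacle, is this decomposition: every interior wall relation is a non-negative combination of primitive relations. I would prove it by induction on a complexity measure of the relation, such as the number of rays occurring with a positive coefficient. Given a wall $\tau$, the set $\{\rho_0,\rho_1,\dots,\rho_n\}$ is not contained in a single cone, so it contains a primitive collection $P$, and $P$ must involve $\rho_0$ because $\{\rho_1,\dots,\rho_n\}$ already spans $\sigma$. Subtracting a suitable positive multiple of the primitive relation for $P$ from the wall relation should yield a relation that is again a non-negative combination of wall relations but of strictly smaller complexity, and I would iterate. Verifying that the coefficients stay non-negative and that the complexity genuinely drops at each stage is the delicate point, and here I would follow the combinatorics of Reid and Kresch closely.

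Finally, for the non-simplicial case I would pass to a simplicial subdivision $\fan'$ of $\fan$ that adds no new rays, apply the simplicial result to $\fan'$, and compare. Since convexity of a function is a property of its values on the common support $|\fan| = |\fan'|$, a function in $\PL(\fan)$ is convex for $\fan$ if and only if it is convex for $\fan'$; the remaining task is to match the primitive collections of $\fan$ with those of $\fan'$ so that the two families of primitive inequalities cut out the same cone. This bookkeeping is exactly what is promised for Section~3, and it is what allows the simplicial argument to carry the general statement.
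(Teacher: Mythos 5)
You set up the problem correctly: when $|\fan|$ is convex of dimension $n$, convexity of $\phi$ is equivalent to the wall inequalities $l_\tau(\phi)\ge 0$ over all interior walls $\tau$ (Theorem~\ref{thm_walls_mori}, dualized), so the theorem reduces to showing that every wall relation $l_\tau$ lies in the cone $\sum_P \R_{\ge 0}\, l_P$ spanned by the primitive relations (Proposition~\ref{equivalent}). Your non-simplicial reduction is also essentially the paper's Type A/Type B analysis, granting the nontrivial fact that the refinement can be chosen quasi-projective with no new rays (Theorem~\ref{thrm_simpl_ref_general}). The genuine gap is the inductive step at the heart of your simplicial argument, and it is fatal as stated.

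First, the bookkeeping fails. The primitive collection $P \subseteq \{\p_0,\dots,\p_n\}$ you extract need not lie in the positive support of the wall relation; worse, its primitive relation $a_P$ has negative entries $-b_\p$ on the rays of the set $S$ spanning the minimal cone containing $\sum_{\p\in P}\p$, and those rays need not be among $\p_0,\dots,\p_n$ at all. Subtracting $\lambda a_P$ from $a_\tau$ therefore typically creates \emph{new} positive coefficients, so ``number of rays with positive coefficient'' is not monotone; and the remainder is not a wall relation, nor visibly a non-negative combination of wall relations of smaller complexity --- at best, after the careful sign analysis carried out in the paper's proof of Proposition~\ref{reid_primcoll}, it is an element of $\NE(X)$, about which your induction hypothesis says nothing. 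Second, and this is the structural point: the arguments of Reid and Kresch that you defer to are not inductions over all walls. They apply only to \emph{extremal} walls, and extremality is what terminates the argument instantly: once $a_\tau = \lambda a_Q + \beta$ with both summands in $\NE(X)$ and $l_\tau$ extremal, both must lie on the ray of $l_\tau$, so $a_\tau$ is itself a positive multiple of a primitive relation --- no iteration occurs. Quasi-projectivity is what legitimizes this: it makes $\NE(X)$ strongly convex, hence generated by its extremal rays, so extremal walls suffice. Your proposal never invokes quasi-projectivity anywhere; if an induction of the kind you describe handled every interior wall, it would prove the statement for arbitrary fans with convex support, which is precisely Conjecture~\ref{casagrande}, the open problem in the paper's final section. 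So the ``delicate point'' you flag is not a verification to be checked but the place where the argument must change shape and pass through extremal rays.
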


Section 2 will prove Theorem~\ref{mainthm} when $X$ is simplicial and
Section 3 will treat the non-simplicial case.

\subsection{The Mori Cone}
The proof of the main theorem will use extremal rays.  Hence we need
recall the Mori cone of a toric variety.  Although this material is
well-known to experts, we include many details since the results we
need do not appear explicitly in the literature.  We begin with the
exact sequence
\[
0 \longrightarrow M_\R \longrightarrow \PL(\fan) \longrightarrow
\pic(X)_\R \longrightarrow 0
\]
which dualizes to
\[
0 \longrightarrow N_1(X) \longrightarrow  \PL(\fan)^* \longrightarrow
N_\R \longrightarrow 0
\]
where $\PL(\fan)^*$ denotes the dual of $\PL(\fan)$ and $N_1(X)$ is
the dual of $\pic(X)_\R$.  The cone $\CPL(\fan) \subset \PL(\fan)$
contains the image of $M_\R$ and hence dualizes to a cone
\[
\NE(X) = \CPL(\fan)^\vee \subset N_1(X) \subset \PL(\fan)^*.
\]
We call $\NE(X)$ the \emph{Mori cone} of $X$.  When $X$ is
quasi-projective, $\nef(X)$ has maximal dimension in $\pic(X)_\R$, so
that the Mori cone $\NE(X) \subset N_1(X)$ is strongly convex.  The
unique minimal generators of the Mori cone are called \emph{extremal
rays}.

We now review the combinatorial description of $\NE(X)$ in terms of
the interior walls of $\fan$.  The basic observation is that relations
among elements of $\fan(1)$ give elements of $N_1(X)$.  The map
$\phi \in \PL(\fan) \mapsto (\phi(\p))_\p \in \R^{\fan(1)}$ gives a
commutative diagram
\[
\xymatrix{ 0 \ar[r] & M_\R \ar[r] & \R^{\fan(1)} \ar[r] &
A_{n-1}(X)_\R \ar[r] & 0\\ 0 \ar[r] & M_\R \ar[r] \ar[u] & \PL(\fan)
\ar[r] \ar[u] & \pic(X)_\R \ar[r] \ar[u] & 0 }
\]
where $A_{n-1}(X)$ is the Chow group of $(n-1)$-cycles modulo rational
equivalence.  This dualizes to
\[
\xymatrix{
0 \ar[r] & A_{n-1}(X)_\R^* \ar[r] \ar[d] & \R^{\fan(1)*} \ar[r] \ar[d] &
N_\R \ar[r] \ar[d] & 0\\ 
0 \ar[r] & N_1(X) \ar[r]  & \PL(\fan)^* \ar[r] & N_\R
\ar[r] \ar[u] & 0
}
\]
In the top row, the map $\R^{\fan(1)*} \to N_\R$ sends the standard
basis element $e_\rho$ to $\p \in N$.  Thus $A_{n-1}(X)_\R^*$ can be
interpreted as all linear relations among the $\p \in \fan(1)$, and
the surjective map $A_{n-1}(X)_\R^* \to N_1(X)$ shows that all
elements of $N_1(X)$ come from linear relations among the $\p \in
\fan(1)$.

Interior walls of $\fan$ give the following linear relations.
Given an interior wall $\tau$, let $\sigma$ and $\sigma'$ be the
$n$-dimensional cones on each side of $\tau$, i.e., $\tau = \sigma
\cap \sigma'$.  Pick $n-1$ linearly independent vectors $\p_1,\dots,
\p_{n-1}$ in $\tau(1)$ and pick vectors $\p_n \in \sigma(1) \setminus
\tau(1)$, $\p_{n+1} \in \sigma'(1) \setminus \tau(1)$.  Then there is
a nontrivial relation
\begin{equation}
\label{wall_relation}
\sum_{i=1}^{n+1} a_i \p_i =0,\quad a_i \in \Q,\ a_n,a_{n+1} >0,
\end{equation}
where the final condition holds since $\p_n$ and $\p_{n+1}$ lie on
opposite sides of the wall.  Hence the coefficients $a_1,\dots,a_n$
are unique up to multiplication by a positive constant.  Let $a_{\tau}
\in \R^{\fan(1)*}$ have components $(a_{\tau})_{\p_i} = a_i$ for
$i=1, \dots, n+1$ and $(a_{\tau})_\p =0$ otherwise.  Using the above
diagram, we see that $a_{\tau} \in A_1(X)_\R$.

\begin{definition}
\label{ltau_def}
Depending on the context, we use the term \emph{wall relation} to
refer to the equation \eqref{wall_relation}, the vector $a_{\tau} \in
A_{n-1}(X)_\R^*$, or its image $l_\tau \in N_1(X)$.
\end{definition}

Notice that in the non-simplicial case, a given wall can have many
choices for the $\p_1,\dots,\p_{n+1}$ in the wall relation
\eqref{wall_relation}, while in the simplicial case,
$\p_1,\dots,\p_{n+1}$ are uniquely determined by the wall.

\begin{theorem}
\label{thm_walls_mori}
Let $\fan$ be a fan in $N_\R \cong \R^n$ with convex support of
dimension $n$.  
\begin{enumerate}
\item For $\tau \in \fan(n-1)^\circ$, the different choices of the wall
relation \eqref{wall_relation} all give the same $l_{\tau} \in N_1(X)$
up to a positive constant.
\item The Mori cone in $N_1(X)$ is given by
\[
\NE(X)  = \sum_{\tau \in \fan(n-1)^\circ} \R_{\geq 0} l_{\tau}.
\]
\end{enumerate}
\end{theorem}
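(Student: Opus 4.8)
The plan is to reduce both statements to a single explicit formula for the pairing of $l_\tau$ with an arbitrary $\phi \in \PL(\fan)$, and then to obtain the Mori cone by dualizing a local characterization of convexity. First I would compute this pairing. Pairing $l_\tau$ with $\phi$ through the inclusion $N_1(X) \subset \PL(\fan)^*$ gives $\sum_{i=1}^{n+1} a_i\,\phi(\p_i)$, where the $a_i$ and $\p_i$ are as in the wall relation \eqref{wall_relation}. Since $\p_1,\dots,\p_n \in \sigma$ and $\p_{n+1} \in \sigma'$, we have $\phi(\p_i) = \langle m_\sigma, \p_i\rangle$ for $i \le n$ and $\phi(\p_{n+1}) = \langle m_{\sigma'}, \p_{n+1}\rangle$. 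Substituting and using $\sum_{i=1}^{n+1} a_i\p_i = 0$ to cancel the $m_\sigma$ terms, I would obtain
\[
\langle l_\tau, \phi\rangle = a_{n+1}\,\langle m_{\sigma'} - m_\sigma, \p_{n+1}\rangle,
\]
which one should read as a positive multiple of the jump in the slope of $\phi$ as one crosses $\tau$ from $\sigma$ into $\sigma'$.

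For part (1), fix a nonzero $u_\tau \in \tau^\perp$ pointing to the $\sigma'$-side of the wall, so that $\langle u_\tau, \p_{n+1}\rangle > 0$. Because $m_{\sigma'} - m_\sigma \in \tau^\perp$ and $\tau^\perp$ is one-dimensional, we may write $m_{\sigma'} - m_\sigma = c_\phi\,u_\tau$, where $c_\phi$ depends linearly on $\phi$ but is independent of every choice made in the wall relation. The formula becomes $\langle l_\tau, \phi\rangle = a_{n+1}\langle u_\tau, \p_{n+1}\rangle\,c_\phi$, and the factor $a_{n+1}\langle u_\tau, \p_{n+1}\rangle$ is strictly positive since $a_{n+1} > 0$. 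Hence every choice yields the same choice-independent functional $\phi \mapsto c_\phi$ up to a positive constant, which is precisely part (1).

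For part (2), since $\NE(X) = \CPL(\fan)^\vee$ and the finitely generated cone $C = \sum_{\tau} \R_{\ge 0}\,l_\tau$ is closed, biduality reduces the claim to the equality
\[
\CPL(\fan) = \{\phi \in \PL(\fan) \mid \langle l_\tau, \phi\rangle \ge 0 \ \text{for all}\ \tau \in \fan(n-1)^\circ\} = C^\vee .
\]
The inclusion $\subseteq$ is immediate, since a convex $\phi$ satisfies $\langle m_{\sigma'}, \p_{n+1}\rangle = \phi(\p_{n+1}) \ge \langle m_\sigma, \p_{n+1}\rangle$ and so $\langle l_\tau, \phi\rangle \ge 0$. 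For the reverse inclusion I would establish the local-to-global principle that the wall inequalities force convexity. Reading each inequality via the formula as saying that the slope of $\phi$ does not decrease when a segment crosses $\tau$ from $\sigma$ into $\sigma'$, I would show that $\phi$ restricted to any line segment in $|\fan|$ has non-decreasing slope, hence is convex. Fixing $\sigma \in \fan(n)$ and joining an interior point of $\sigma$ to an arbitrary $x \in |\fan|$, the function $\phi - \langle m_\sigma, \cdot\rangle$ then starts out identically zero and has non-decreasing slope, so it is $\ge 0$ at $x$; as $\sigma$ and $x$ vary this yields $\phi = \max_{\sigma} \langle m_\sigma, \cdot\rangle$ on $|\fan|$, a maximum of linear functions and hence subadditive on the convex cone $|\fan|$, so $\phi \in \CPL(\fan)$.

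I expect the main obstacle to be the segment argument in the reverse inclusion. To apply the wall inequalities one wall at a time, the chosen segment must cross only interior walls and must avoid the cones of codimension $\ge 2$ where several walls meet; this is exactly where convexity of $|\fan|$ (so that the segment stays in the support, and $u+v \in |\fan|$ for subadditivity) and maximality of its dimension enter. I would secure the transversality by a genericity argument, proving $\phi(x) \ge \langle m_\sigma, x\rangle$ first for a dense set of $x$ and then extending to all of $|\fan|$ by continuity.
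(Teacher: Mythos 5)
Your proposal is correct, and for part (1) it coincides with the paper's argument: both of you substitute the wall relation into $\sum_i a_i\phi(\p_i)$, the paper collapsing the $m_{\sigma'}$-terms to get $\langle m_\sigma-m_{\sigma'},a_n\p_n\rangle$ and you collapsing the $m_\sigma$-terms to get the mirror expression $a_{n+1}\langle m_{\sigma'}-m_\sigma,\p_{n+1}\rangle$, and both conclude from the one-dimensionality of $\tau^\perp$ that all choices give the same functional up to a positive constant. The genuine difference is part (2): the paper does not prove it at all, but cites the Kleiman--Nakai criterion of Oda and Park \cite[Thm.\ 2.3]{odapark}, the point of its computation in part (1) being precisely to match its $l_\tau$ with theirs. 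You instead reprove that result from scratch: biduality for the finitely generated (hence closed) cone $C=\sum_\tau \R_{\ge 0}l_\tau$ reduces $\NE(X)=\CPL(\fan)^\vee=C$ to the equality $\CPL(\fan)=C^\vee$, i.e.\ to the statement that the wall inequalities cut out the convex functions, which you establish by the local-to-global segment argument ($\phi$ has non-decreasing slope along a generic segment leaving $\sigma$, hence $\phi\ge\langle m_\sigma,\cdot\rangle$ on a dense set and then everywhere, hence $\phi=\max_\sigma\langle m_\sigma,\cdot\rangle$ is convex). Your route buys a self-contained proof with the hypotheses (convexity and full dimension of $|\fan|$) visibly doing their work; the paper's route buys brevity and an explicit link to the literature. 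Two details you should nail down when writing this up: first, the forward inclusion is not quite tautological under the paper's definition of convexity ($\phi(u)+\phi(v)\ge\phi(u+v)$); the needed inequality $\phi(\p_{n+1})\ge\langle m_\sigma,\p_{n+1}\rangle$ follows by applying subadditivity to $u=\p_{n+1}$ and $v=tw$ with $w$ interior to $\sigma$ and $t\gg 0$, so that $u+v\in\sigma$. Second, in the segment argument you must know that every wall met by the open segment is an \emph{interior} wall and is crossed from one maximal cone into exactly one other; this follows because the half-open segment from an interior point of $\sigma$ to $x\in|\fan|$ lies in the interior of $|\fan|$ (convexity of the support of dimension $n$), which is exactly where the hypotheses of the theorem enter.
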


\begin{proof}
This is the Kleiman-Nakai criterion from Oda and Park \cite[Thm.\
2.3]{odapark}.  We give the details since their definition of $l_\tau$
differs from ours.  

Let $\tau = \sigma \cap \sigma'$ and pick a wall relation
$\sum_{i=1}^{n+1} a_i \p_i =0$, $a_n,a_{n+1} >0$ as in
\eqref{wall_relation}.  Rescaling by a positive constant, we may
assume $a_{n+1} = 1$, so that
\begin{equation}
\label{pnp1eq}
\p_{n+1} = - a_1\p_1 -\cdots -a_n\p_n.
\end{equation}
Then $l_\tau \in \PL(\fan)^*$ is the linear functional on $\PL(\fan)$
given by
\begin{align*}
l_\tau(\phi) &= a_1 \phi(\p_1) + \cdots + a_{n-1} \phi(\p_{n-1}) + a_n
\phi(\p_n) + \phi(\p_{n+1}) \\ 
&= a_1 \langle m_{\sigma'},\p_1\rangle + \cdots + a_{n-1}\langle
m_{\sigma'},\p_{n-1}\rangle + a_n \langle m_{\sigma},\p_n\rangle +
\langle m_{\sigma'},\p_{n+1}\rangle
\end{align*}
since $\phi(u) = \langle m_{\sigma'},u\rangle$ for $u \in \sigma'$ and
$\phi(\p_n) = \langle m_{\sigma},\p_n\rangle$ since $\p_n \in \sigma$.
However, \eqref{pnp1eq} implies that
\begin{align*}
\langle m_{\sigma'},\p_{n+1}\rangle &= \langle m_{\sigma'},- a_1\p_1
-\cdots -a_{n-1} \p_{n-1} -a_n\p_n \rangle\\
&= - a_1\langle m_{\sigma'},\p_1\rangle
-\cdots -a_{n-1}\langle m_{\sigma'}, \p_{n-1}\rangle
-a_n\langle m_{\sigma'}, \p_n \rangle 
\end{align*}
Hence the above formula for $l_\tau(\phi)$ simplifies to
\[
l_\tau(\phi) = a_n \langle m_{\sigma},\p_n\rangle - a_n\langle
m_{\sigma'}, \p_n \rangle  = \langle m_{\sigma}- m_{\sigma'},
a_n\p_n\rangle.
\]
Note that $a_n \p_n \in \sigma \setminus \tau$.  Since $m_\sigma -
m_{\sigma'} \in \tau^\perp$ and $(\sigma + \Span(\tau))/\Span(\tau)$ is
1-dimensional, it follows that up to a positive constant,
\[
l_\tau(\phi) = \langle m_{\sigma}- m_{\sigma'},
v\rangle
\]
for \emph{any} $v \in \sigma \setminus \tau$.  This proves the first
part of the theorem and also shows that our $l_\tau$ agrees with the
$l_\tau$ appearing in the statement of \cite[Thm.\ 2.3]{odapark}.
Then the second part follows immediately from \cite[Thm.\
2.3]{odapark}.
\end{proof}

\subsection{Primitive Relations}
Let $P=\{\p_1,\dots, \p_k\}$ be a primitive collection for $\fan$.
Then $\p_1 + \cdots + \p_k$ lies in some unique minimal cone $\sigma$
of $\Sigma$.  Pick a subset $S \subset \sigma(1)$ satisfying
\begin{equation}
\label{eq_prim_nonsimpl}
\p_1 + \cdots + \p_k = \sum_{\p \in S} b_\p \p, \quad b_\p >
0,\ S  \ \text{linearly independent}.
\end{equation}
The equation \eqref{eq_prim_nonsimpl} gives the vector
$a_{P} \in \R^{\fan(1)*}$ defined by
\[
(a_P)_\p = 
\begin{cases} 
1 & \p \in P\setminus S\\
1-b_\p & \p \in P\cap S\\ 
-b_\p & \p \in S\setminus P\\ 
0 &\text{otherwise}.
\end{cases}
\]
From \eqref{eq_prim_nonsimpl}, it follows that $a_P \in
A_{n-1}(X)_\R^*$.  

\begin{definition}
Depending on the context, we use the term \emph{primitive relation} to
refer to the equation \eqref{eq_prim_nonsimpl}, the vector $a_{P} \in
A_{n-1}(X)_\R^*$, or its image $l_{P} \in N_1(X)$ under the map
$A_{n-1}(X)_\R^* \to N_1(X)$.
\end{definition}

When $X$ is smooth, the case $\p \in P\cap S$ cannot occur
\cite[Prop.\ 3.1]{batyrev}, but happens frequently in the simplicial
case.  In this case, we bound $b_\rho$ as follows.

\begin{lemma}
\label{poscoeffs}
The coefficients in the primitive relation \eqref{eq_prim_nonsimpl}
satisfy $b_\p < 1$ when $\p \in P\cap S$.  Hence $P$ is determined by the
positive entries of $a_P$.
\end{lemma}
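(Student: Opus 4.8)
The plan is to establish the strict inequality $b_\p<1$ by contradiction and then to obtain the last sentence by directly inspecting the formula for $a_P$. So fix $\p\in P\cap S$, relabel it as $\p_1$, and suppose toward a contradiction that $b_{\p_1}\ge 1$. The goal will be to show that this assumption forces the entire collection $P$ into the single cone $\sigma$, which is impossible since $P$ is a primitive collection.

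First I would rearrange the primitive relation \eqref{eq_prim_nonsimpl}, moving the $\p_1$ on the left to the right-hand side:
\[
\p_2+\cdots+\p_k=(b_{\p_1}-1)\,\p_1+\sum_{\p\in S\setminus\{\p_1\}}b_\p\,\p.
\]
Under the assumption $b_{\p_1}\ge 1$ the coefficient $b_{\p_1}-1$ is nonnegative, and every $b_\p>0$, so the right-hand side is a nonnegative combination of elements of $S\subseteq\sigma(1)$. Hence $\p_2+\cdots+\p_k\in\sigma$.

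The main obstacle is the next step: passing from the membership $\p_2+\cdots+\p_k\in\sigma$ of the \emph{sum} to membership of each summand, something that fails for an arbitrary sum of rays. Here I would exploit that $\{\p_2,\dots,\p_k\}$ is a proper subset of the primitive collection $P$ (note $k\ge 2$, since a single ray lies in a cone of $\fan$), hence is contained in a single cone $\gamma\in\fan$. Then the positive combination $\p_2+\cdots+\p_k$ also lies in $\gamma$, so by the previous step it lies in $\sigma\cap\gamma$, which in a fan is a common face $\delta$ of $\sigma$ and $\gamma$, say $\delta=\gamma\cap m^\perp$ with $m\in\gamma^\vee$. Since each $\langle m,\p_i\rangle\ge 0$ for $i\ge 2$ and these sum to $\langle m,\p_2+\cdots+\p_k\rangle=0$, every $\langle m,\p_i\rangle=0$, so $\p_2,\dots,\p_k\in\delta\subseteq\sigma$. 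As $\p_1\in S\subseteq\sigma$ as well, the whole set $\{\p_1,\dots,\p_k\}$ lies in the single cone $\sigma$, contradicting the definition of a primitive collection. This is exactly the place where the non-simplicial generality is felt: in the simplicial case one has $S=\sigma(1)$ and the face argument is unnecessary. Therefore $b_{\p_1}<1$.

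Finally, the last sentence follows immediately once $b_\p<1$ is known for every $\p\in P\cap S$. Reading off the definition of $a_P$, the entry $(a_P)_\p$ equals $1$ for $\p\in P\setminus S$ and $1-b_\p>0$ for $\p\in P\cap S$, hence is strictly positive exactly on $P$, whereas it equals $-b_\p<0$ for $\p\in S\setminus P$ and $0$ otherwise. Thus $P=\{\p\in\fan(1)\mid (a_P)_\p>0\}$, so $P$ is recovered from, and therefore determined by, the positive entries of $a_P$.
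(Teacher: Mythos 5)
Your proof is correct and follows essentially the same route as the paper: subtract $\p_1$ from the primitive relation, observe that the resulting identity forces $\p_2,\dots,\p_k$ into a cone of $\fan$ contained in $\sigma$, and conclude that $P\subseteq\sigma$, a contradiction. The only difference is that you spell out the common-face argument (via $\sigma\cap\gamma$ and $m\in\gamma^\vee$) that the paper compresses into the sentence ``this equation implies that $\rho_2,\dots,\rho_k$ and $S\setminus\{\p_1\}$ lie in the same cone,'' so your write-up is a more detailed version of the same proof.
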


\begin{proof}
Suppose $\p_1 \in S$ and $b_{\p_1} \ge 1$.  Subtracting
$\p_1$ from each side of \eqref{eq_prim_nonsimpl} gives
\[
\p_2 + \cdots + \p_k = (b_{\p_1}-1)\p_1 + \sum_{\p \in S
\setminus\{\p_1\}} b_\p \p
\]
Since $\rho_2,\dots,\rho_k$ lie in a cone of $\Sigma$, this equation
implies that $\rho_2,\dots,\rho_k$ and $S \setminus \{\p_1\}$
lie in the same cone of $\Sigma$.  Adding in $\p_1$ shows that $P$
lies in a cone of $\Sigma$, which is impossible.
\end{proof}

The minimal cone $\sigma$ containing $\p_1+\cdots+\p_k$ need not be
simplicial, so there may be many subsets $S$ satisfying
\eqref{eq_prim_nonsimpl}.  But when there are many choices for $a_P$,
they all give the same element $l_{P} \in N_1(X)$, as shown by the
following proposition.

\begin{proposition}
\label{primrel_unique}
Let $P = \{\p_1,\dots,\p_k\}$ be a primitive collection for $\fan$ and
let $l_P \in N_1(X)$ be defined as above.  Then:
\begin{enumerate}
\item When regarded as an element of $\PL(\fan)^*$, $l_P$ is the
linear functional on $\PL(\fan)$ defined by
\[
\phi \longmapsto \phi(\p_1) + \cdots + \phi(\p_k) -
\phi(\p_1 + \cdots + \p_k).
\]
\item $l_P \in \NE(X)$.
\end{enumerate}
\end{proposition}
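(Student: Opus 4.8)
The plan is to prove part (1) by unwinding the definition of the map $A_{n-1}(X)_\R^* \to N_1(X)$ and computing directly, after which part (2) will follow at once.

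For part (1), I would invoke the commutativity of the dual diagram in Subsection 1.2: the element $l_P \in N_1(X) \subset \PL(\fan)^*$ is the image of $a_P$ under $\R^{\fan(1)*} \to \PL(\fan)^*$, the dual of $\phi \mapsto (\phi(\p))_\p$. Thus $l_P$ is the functional $\phi \mapsto \sum_{\p \in \fan(1)} (a_P)_\p\,\phi(\p)$. Substituting the four cases defining $a_P$ and regrouping, the coefficients $1$ (on $P \setminus S$) and $1 - b_\p$ (on $P \cap S$) assemble into $\sum_{\p \in P} \phi(\p)$, while the $-b_\p$ contributions from $P \cap S$ and $S \setminus P$ assemble into $-\sum_{\p \in S} b_\p\,\phi(\p)$, giving
\[
l_P(\phi) = \phi(\p_1) + \cdots + \phi(\p_k) - \sum_{\p \in S} b_\p\,\phi(\p).
\]

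The crux is to rewrite the last sum as $\phi(\p_1 + \cdots + \p_k)$. Here I would use that $S \subset \sigma(1)$ and the point $\p_1 + \cdots + \p_k = \sum_{\p \in S} b_\p\,\p$ both lie in the single cone $\sigma$, on which $\phi$ agrees with the linear functional $m_\sigma$. Then $\phi(\p) = \langle m_\sigma, \p\rangle$ for every $\p \in S$, so by linearity
\[
\sum_{\p \in S} b_\p\,\phi(\p) = \Big\langle m_\sigma,\, \sum_{\p \in S} b_\p\,\p\Big\rangle = \langle m_\sigma,\, \p_1 + \cdots + \p_k\rangle = \phi(\p_1 + \cdots + \p_k),
\]
which yields the desired formula. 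This is the only place the geometry enters, and it is exactly why the functional is independent of the chosen $S$: both $S$ and the sum $\p_1 + \cdots + \p_k$ sit in one cone where $\phi$ is linear.

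For part (2), recall that $\NE(X) = \CPL(\fan)^\vee$, so it is enough to check $l_P(\phi) \ge 0$ for every convex $\phi \in \CPL(\fan)$. By part (1), $l_P(\phi)$ equals $\phi(\p_1) + \cdots + \phi(\p_k) - \phi(\p_1 + \cdots + \p_k)$, which is nonnegative because convex piecewise-linear functions satisfy the primitive inequality, as observed right after its definition. Hence $l_P \in \NE(X)$. I anticipate no serious obstacle: the substantive steps are the bookkeeping in regrouping the four cases of $a_P$ and the single observation that the linearity of $\phi$ on $\sigma$ applies simultaneously to all rays of $S$ and to their positive combination.
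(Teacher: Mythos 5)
Your proposal is correct and follows essentially the same route as the paper's own proof: both expand $l_P(\phi) = \sum_\p (a_P)_\p\,\phi(\p)$, regroup to get $\sum_i \phi(\p_i) - \sum_{\p\in S} b_\p\,\phi(\p)$, and then use linearity of $\phi$ on the minimal cone $\sigma$ containing $\p_1+\cdots+\p_k$ (with $S \subset \sigma(1)$) to identify the last sum with $\phi(\p_1+\cdots+\p_k)$, after which part (2) follows from $\NE(X)=\CPL(\fan)^\vee$ and the inclusion \eqref{cone_inclusion}. Your write-up merely makes explicit the diagram chase and the functional $m_\sigma$ that the paper leaves implicit.
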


\begin{proof} Let $\sigma \in \fan$ be the smallest cone
containing $\p_1 + \cdots + \p_k$.  Since $\phi$ is linear on
$\sigma$ and $S \subset \sigma(1)$, we obtain
\begin{align*}
l_P(\phi) &= {\textstyle\sum_\p (a_P)_\p \phi(\p)= \sum_{i=1}^k
\phi(\p_i) - \sum_{\p \in S} b_\p \phi(\p)}\\ 
&= {\textstyle\sum_{i=1}^k \phi(\p_i) - \phi(\sum_{\p \in S}
b_\p \p) = \sum_{i=1}^k \phi(\p_i) - \phi(\sum_{i=1}^k
p_i).}
\end{align*}
This proves the first part of the proposition, and then the second
part follows immediately from the first part and
\eqref{cone_inclusion} since $\NE(X) = \CPL(\fan)^\vee$.
\end{proof}

We can formulate Theorem~\ref{mainthm} in terms of primitive relations
as follows.

\begin{proposition}
\label{equivalent}
Let $X$ be the toric variety of the fan $\fan$ in $N_\R \cong \R^n$.
If $|\fan|$ is convex of dimension $n$, then the following are
equivalent:
\begin{enumerate}
\item $\CPL(\Sigma) = \big\{ \phi \in \PL(\fan) \mid\ \phi(\p_1) +
\cdots + \phi(\p_k) \ge \phi(\p_1 + \cdots + \p_k )$ 
for all primitive collections $\{
\p_1,\dots, \p_k \}\ \text{for}\ \fan \big\}$. 
\item $\NE(X) = \sum_P \R_{\ge 0} l_P$,
where the sum is over all primitive collections for $\fan$.
\end{enumerate}
\end{proposition}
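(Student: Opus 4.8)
The plan is to recognize statements (1) and (2) as dual to one another under the pairing between $\PL(\fan)$ and $\PL(\fan)^*$, and then to read off the equivalence from biduality of convex cones, the only genuinely non-formal input being that the cone appearing in (2) is finitely generated.

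First I would name the two objects. Let $C \subset \PL(\fan)$ be the cone on the right-hand side of (1), and let $K = \sum_P \R_{\ge 0}\, l_P \subset N_1(X) \subset \PL(\fan)^*$ be the cone on the right-hand side of (2). By Proposition~\ref{primrel_unique}(1) we have $l_P(\phi) = \phi(\p_1) + \cdots + \phi(\p_k) - \phi(\p_1 + \cdots + \p_k)$, so the defining inequalities of $C$ are precisely $l_P(\phi) \ge 0$. Since $K$ is generated by the $l_P$, a nonnegative combination of them is nonnegative on $\phi$ exactly when each $l_P$ is, whence $C = K^\vee$, the dual cone. Thus (1) reads $\CPL(\fan) = K^\vee$ and (2) reads $\NE(X) = K$.

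Next I would record two standing facts. By definition $\NE(X) = \CPL(\fan)^\vee$, and because $\pm M_\R \subset \CPL(\fan)$ this dual cone sits inside $M_\R^\perp = N_1(X)$, as already noted before \eqref{cone_inclusion}. Moreover $\CPL(\fan)$ is the intersection of the closed half-spaces $\{\phi \mid \phi(u) + \phi(v) - \phi(u+v) \ge 0\}$, hence is a closed convex cone; biduality then gives $\CPL(\fan)^{\vee\vee} = \CPL(\fan)$, that is, $\NE(X)^\vee = \CPL(\fan)$.

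With these in place both implications are short. For $(2)\Rightarrow(1)$, assume $\NE(X) = K$ and dualize to get $K^\vee = \NE(X)^\vee = \CPL(\fan)$, which is (1) after rewriting $C = K^\vee$. For $(1)\Rightarrow(2)$, assume $\CPL(\fan) = K^\vee$ and dualize to get $\NE(X) = \CPL(\fan)^\vee = K^{\vee\vee} = \overline{K}$. The one step requiring care---and the only place the argument is not purely formal---is passing from $\overline{K}$ back to $K$: since $\fan(1)$ is finite there are only finitely many subsets, hence finitely many primitive collections $P$, so $K$ is a finitely generated, therefore polyhedral and closed, cone. Thus $\overline{K} = K$ and $\NE(X) = K$, giving (2). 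I would remark that quasi-projectivity plays no role in this equivalence; it enters only when one shows that the two equivalent statements actually hold.
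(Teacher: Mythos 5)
Your proof is correct and is essentially the paper's own argument: the paper's proof consists of the single line ``This follows easily from Proposition~\ref{primrel_unique} and $\NE(X) = \CPL(\fan)^\vee$,'' and your write-up is precisely the expansion of that remark via cone duality. Your attention to the two non-formal points the paper elides---closedness of $\CPL(\fan)$ for biduality, and finite generation of $\sum_P \R_{\ge 0} l_P$ so that no closure is needed---is exactly what makes the ``follows easily'' legitimate.
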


\begin{proof}
This follows easily from
Proposition~\ref{primrel_unique} and $\NE(X) = \CPL(\fan)^\vee$.
\end{proof}

The strategy for proving Theorem~\ref{mainthm} in the simplicial case
is the observation, implicit in \cite{reid}, that every minimial
generator of $\NE(X)$ is a primitive relation $l_P$ for some primitive
collection $P$.  Then Theorem~\ref{mainthm} for simplicial fans
follows immediately from Proposition~\ref{equivalent}.  We
give the details of this argument in Section~2.

\subsection{Curves and the Mori Cone}
An interior wall $\tau$ gives a complete torus-invariant curve
$V(\tau) \cong \P^1$ in $X$.  Let
\[
c_\tau : \pic(X)_\R \longrightarrow \R
\]
denote the linear functional that sends an $\R$-Cartier divisor $D$ to the
intersection product $V(\tau) \cdot D$.  Thus
\[
c_\tau \in \pic(X)_R^* = N_1(X).
\]
Up to a positive multiple, this gives the same class as the wall
relation $l_\tau \in N_1(X)$ from Definition~\ref{ltau_def}.  Although
this result is well-known to experts, we include a proof for
completeness.

\begin{proposition}
\label{ctaultau}
Let $\fan$ be a fan in $N_\R \cong \R^n$ with convex support of
dimension $n$.  For each $\tau \in \fan(n-1)^\circ$, $c_\tau \in
N_1(X)$ is a positive multiple of the wall relation
$l_\tau$ appearing in Theorem~\ref{thm_walls_mori}.
\end{proposition}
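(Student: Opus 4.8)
The plan is to compute the intersection number $V(\tau)\cdot D$ directly for an arbitrary torus-invariant Cartier divisor $D$ and to match it, up to a positive constant independent of $D$, against the formula for $l_\tau$ obtained in the proof of Theorem~\ref{thm_walls_mori}. Both $c_\tau$ and $l_\tau$ live in $N_1(X)=\pic(X)_\R^*$, and by the exact sequence $0\to M_\R\to\PL(\fan)\to\pic(X)_\R\to 0$ every class in $\pic(X)_\R$ is represented by some $\phi\in\PL(\fan)$, i.e.\ by a torus-invariant $\R$-Cartier divisor $D$. Viewing $l_\tau$ as a functional on $\PL(\fan)$, it kills the image of $M_\R$ and so descends to $\pic(X)_\R$, sending $[D]=[\phi]$ to $l_\tau(\phi)$. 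Hence it suffices to produce a constant $c>0$, the same for all $D$, with $V(\tau)\cdot D = c\,l_\tau(\phi)$, where $\phi$ is the support function of $D$.

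First I would pin down the geometry of $V(\tau)$. Because $\tau$ is an interior wall, the only cones of $\fan$ containing $\tau$ are $\tau$ itself and the two $n$-dimensional cones $\sigma,\sigma'$ on its two sides. Passing to $N(\tau)_\R = N_\R/\Span(\tau)\cong\R$, the star $\star(\tau)$ becomes the fan whose two maximal cones are the opposite rays $\overline\sigma$ and $\overline{\sigma'}$, so $V(\tau)\cong\P^1$ is the toric variety of $\star(\tau)$, with torus-fixed points corresponding to $\sigma$ and $\sigma'$.

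Next I would compute $V(\tau)\cdot D=\deg\big(\mathcal{O}_X(D)\res{V(\tau)}\big)$ by restricting the support function to this star. The Cartier data of $D$ is the pair $m_\sigma,m_{\sigma'}\in M$ with $m_\sigma-m_{\sigma'}\in\tau^\perp$, which is precisely the compatibility condition defining $\PL(\fan)$. Restriction replaces $\phi$ by the function on $\star(\tau)$ with slope $m_\sigma$ on $\overline\sigma$ and slope $m_{\sigma'}$ on $\overline{\sigma'}$; since $m_\sigma-m_{\sigma'}\in\tau^\perp$, this descends to a well-defined piecewise-linear function on $N(\tau)_\R$. Letting $\overline v\in N(\tau)$ be the primitive generator of $\overline\sigma$ and lifting it to some $v\in N$ on the $\sigma$-side of $\tau$, the standard degree computation for line bundles on $\P^1$, in the paper's convention $D=\sum_\p\phi(\p)D_\p$, yields
\[
V(\tau)\cdot D \;=\; \langle m_\sigma-m_{\sigma'},\,v\rangle.
\]
On the other hand, the proof of Theorem~\ref{thm_walls_mori} shows that, up to a positive constant, $l_\tau(\phi)=\langle m_\sigma-m_{\sigma'},w\rangle$ for \emph{any} $w\in\sigma\setminus\tau$. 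Since $\langle m_\sigma-m_{\sigma'},-\rangle$ vanishes on $\Span(\tau)$ it factors through $N(\tau)_\R\cong\R$, and both $v$ and $w$ map into the same ray $\overline\sigma$; thus $\langle m_\sigma-m_{\sigma'},v\rangle$ and $l_\tau(\phi)$ are positive multiples of one another by a ratio depending only on $\tau$, $\sigma$, and the chosen vectors, not on $D$. This gives $c_\tau=c\,l_\tau$ with $c>0$.

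The step I expect to be the main obstacle is the degree computation in the third paragraph: one must correctly restrict the Cartier divisor to the orbit closure $V(\tau)$ and track signs so that the proportionality constant comes out positive rather than negative. The positivity is exactly where the condition $a_n>0$ in the wall relation \eqref{wall_relation} is used — equivalently, the fact that $v$ and $w$ lie on the \emph{same} side of $\tau$; getting this orientation wrong would flip the sign and wreck the comparison with the Mori cone. Because only a positive multiple is claimed, no lattice multiplicity of the wall needs to be tracked.
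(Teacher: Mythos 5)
Your proof is correct, but it takes a genuinely different route from the paper's. The paper argues in two steps: in the simplicial case it quotes Reid's intersection formula \eqref{reid_prop_2.7} for $V(\tau)\cdot D_\p$ (which requires the $D_\p$ to be $\Q$-Cartier, hence simpliciality), and in the general case it picks a simplicial refinement $\fan'$ with $\fan'(1)=\fan(1)$ (a forward reference to Corollary~\ref{cor_simpl_ref_general} in Section~3), pushes forward a curve $V(\tau')$ for a wall $\tau'\subset\tau$, and applies the projection formula $V(\tau)\cdot D=\frac1d\,V(\tau')\cdot\pi^*D$. You instead compute $\deg\bigl(\mathcal{O}_X(D)\res{V(\tau)}\bigr)$ directly from the Cartier data $(m_\sigma,m_{\sigma'})$, using the description of $V(\tau)\cong\P^1$ as the toric variety of the star of $\tau$ in $N_\R/\Span(\tau)$: after subtracting the global linear function $\langle m_\sigma,-\rangle$ the restricted divisor has data $(0,\,m_{\sigma'}-m_\sigma)$, giving $V(\tau)\cdot D=\langle m_\sigma-m_{\sigma'},v\rangle$ for $v$ mapping to the primitive generator on the $\sigma$-side, which is exactly the expression the paper derives for $l_\tau(\phi)$ in the proof of Theorem~\ref{thm_walls_mori}, up to the positive factor relating $v$ to any $w\in\sigma\setminus\tau$; your sign is the right one for the convention $D=\sum_\p\phi(\p)D_\p$. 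What your route buys: it is uniform in the simplicial and non-simplicial cases, it only ever intersects $V(\tau)$ with genuinely Cartier divisors (never with individual, possibly non-$\Q$-Cartier, prime divisors $D_\p$), and it removes the dependence both on Reid's paper and on the refinement theorem of Section~3, cleaning up the logical order. What the paper's route buys: formula \eqref{reid_prop_2.7} is needed elsewhere anyway (in the second proof of Proposition~\ref{reid_primcoll}), so the paper reuses machinery already in play, and the refinement-plus-projection-formula argument rehearses the technique that drives the non-simplicial results of Section~3. Your one glossed step---restriction of Cartier data to an orbit closure and the degree formula on $\P^1$---is standard, and is no larger an appeal to outside facts than the paper's own citation of \cite[(2.7)]{reid}.
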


\begin{proof}
When $\fan$ is simplicial, we have $\tau(1) = \{\p_1,\dots,\p_{n-1}\}$
and as in the proof of Theorem~\ref{thm_walls_mori}, we have the wall
relation
\[
a_1 \p_1 + \cdots + a_n \p_n + \p_{n+1} = 0.
\]
Since $\fan$ is simplicial, the divisors $D_\p$ corresponding to $\p \in
\fan(1)$ are $\Q$-Cartier, so that $V(\tau) \cdot D_\p$ is defined.  
By \cite[(2.7)]{reid}, we have
\begin{equation}
\label{reid_prop_2.7}
V(\tau) \cdot D_\p = \begin{cases} 0 & \p \notin
  \{\p_1,\dots,\p_{n+1}\}\\ 
a_i V(\tau) \cdot D_{\p_{n+1}} & \p = \p_i,\ i \in \{1,\dots,n\}\\
V(\tau) \cdot D_{\p_{n+1}} > 0 & \p = \p_{n+1}.\end{cases}
\end{equation}
The proof in \cite{reid} assumes $\fan$ is simplicial and complete and
$\tau$ is any wall; the argument applies without change when $\fan$ is
simplicial and $\tau$ is an interior wall.

For the general case, we use the well-known fact that $\fan$ has a
simplicial refinement $\fan'$ such that $\fan'(1) = \fan(1)$ (see
Corollary~\ref{cor_simpl_ref_general} and
Remark~\ref{rem_simpl_ref_general}).  If $X'$ is the toric variety of
$\fan'$, then we have a proper map
\[
X' \overset{\pi}\rightarrow X.
\]
Let $\tau'$ be an interior wall of $\fan'$ contained in $\tau$, and
let $V(\tau')$ and $V(\tau)$ be the corresponding curves in $X'$ and
$X$.  The induced map $\pi\res{V(\tau')} : V(\tau') \to V(\tau)$ has
degree $d = [\Z(\tau\cap N):\Z(\tau'\cap N)]$, which implies that
$\pi_{*} V(\tau')= d\,V(\tau)$.  Let $D$ be a Cartier divisor on
$X$. By the projection formula,
\begin{equation*}
V(\tau) \cdot D = {\textstyle\frac1d}\pi_{*} V(\tau') \cdot D =
{\textstyle\frac1d}V(\tau') \cdot \pi^*D.
\end{equation*}
If we write $D = \sum_{\p} \alpha_\p D_\p$ on $X$, then $\pi^*D = \sum_{\p}
\alpha_\p D_\p$ on $X'$ since $\fan'(1) = \fan(1)$.  If $a_{\tau'} =
(a_\p)_\p$ is the wall relation of $\tau'$ coming from
\eqref{wall_relation}, then up to a positive constant,
\[
V(\tau') \cdot \pi^*D = \sum_\p a_\p \alpha_\p
\]
since $\fan'$ is simplicial.  However, the wall relation for $\tau'$
is one of the (possibly many) wall relations for $\tau$, i.e.,
$a_{\tau'}$ is one of the possible choices for $a_{\tau}$.  Then the
formula
\[
V(\tau) \cdot D = {\textstyle\frac1d}V(\tau') \cdot \pi^*D =
{\textstyle\frac1d}\sum_\p a_\p \alpha_\p
\]
shows (again up to a positive constant) that the class of $V(\tau)$ in
$N_1(X)$ is the image of $a_\tau$ in $N_1(X)$.  In other words,
$c_\tau$ equals $l_\tau$ up to a positive constant, as claimed.
\end{proof}

We conclude our discussion of the Mori cone explaining how our
definition of $\NE(X)$ relates to the standard geometric approach.
Since $X$ need not be complete, we work in the relative context.  Let
$U$ be the affine toric variety of the strongly convex cone
$|\fan|/(|\fan| \cap (-|\fan|))$.  This gives a proper toric morphism
$X \to U$.  

Following \cite{matsuki} or \cite{reid}, the Mori cone of $X \to U$ is
defined as follows.  Let $Z_1(X/U)$ be the free group generated by
irreducible curves in $X$ that map to a point in $U$.  Then we have a
natural pairing
\[
Z_1(X/U) \times \pic(X) \longrightarrow \Z.
\]
By restricting to torus-invariant curves coming from interior walls,
one sees easily that this pairing is nondegenerate with respect to
$\pic(X)$, i.e., if a Cartier divisor satisfies $C\cdot D = 0$ for all
torus-invariant curves $C$ coming from interior walls, then $[D] = 0$
in $\pic(X)$.  It follows that the above pairing induces a perfect
pairing
\[
N_1(X/U) \times \pic(X)_\R \longrightarrow \R.
\]
Thus $N_1(X/U)$ is what we call $N_1(X)$.  Dropping the $U$ from the
notation is reasonable since in our situation $U$ is determined
functorially by $X$.

Finally, $\NE(X/U) \subset N_1(X)$ is the cone generated by
irreducible curves in $X$ that map to a point in $U$, and the
\emph{Mori cone} is its closure $\overline{\NE}(X/U)$ in $N_1(X)$.
Then Theorem~\ref{thm_walls_mori} and Proposition~\ref{ctaultau}
easily imply that
\[
\NE(X) = \NE(X/U) = \overline{\NE(X/U)} = \sum_{\tau \in \fan(n-1)^\circ}
\R_{\ge 0}c_\tau,
\]
where $c_\tau \in N_1(X)$ is the class of the torus-invariant curve
$V(\tau)$ associated to $\tau$.  This is the \emph{Relative Toric Cone
Theorem}.

\begin{remark}
The Relative Toric Cone Theorem is stated for toric morphisms $X \to
S$ by Matsuki \cite[Thm.\ 14-1-4]{matsuki} or Reid \cite[(1.7)]{reid}.
As pointed out by Fujino and Sato in \cite[Ex.\ 4.3]{fs}, this fails
when the torus action on $S$ has no fixed points.  They give the easy
example of the projection map $X = \C^* \times \P^1 \to S = \C^*$.
The fibers of this map are never torus-invariant, so that
torus-invariant curves cannot generate $N_1(X/S)$.  Fortunately, the
Relative Toric Cone Theorem holds for our map $X \to U$ because
$|\fan| \subset N_\R \cong \R^n$ is convex of dimension $n$.
\end{remark}

\section{The Simplicial Case}
\label{section3}

A nice feature of the simplicial case is that $N_1(X) =
A_{n-1}(X)_\R^*$.  Hence an interior wall $\tau$ gives $a_\tau =
l_\tau$, and a primitive collection $P$ gives $a_P = l_P$.

\subsection{Primitive Collections and Extremal Walls}
Let $\tau$ be an interior wall of a simplicial fan
$\fan$ with $\tau(1) = \{\p_1,\dots,\p_{n-1}\}$, and let $\p_n$ and
$\p_{n+1}$ be the generators that are needed to span the cones on each
side of the wall. The uniquely determined wall relation is
\begin{equation}
\label{wallrelation}
\sum_{i=1}^{n+1} a_i \p_i = 0, \ a_n > 0, \ a_{n+1} =1, \ a_i \in \Q,
\end{equation}
by the discussion following \eqref{wall_relation}.  

\begin{proposition}
\label{reid_primcoll}
Let $\fan$ be a quasi-projective simplicial fan with convex support of
dimension $n$.  Let $\tau$ be an extremal wall, meaning
that the wall relation \eqref{wallrelation}
generates an extremal ray $l_\tau \in NE(X)$.  Then:
\begin{enumerate}
\item $P = \{\p_i \mid a_i > 0\}$ 
is a primitive collection for $\fan$.
\item In $\R^{\fan(1)*}$, the primitive relation $a_P$ of $P$ and wall
relation $a_\tau$ of $\tau$ are equal up to a positive constant.
\end{enumerate}
\end{proposition}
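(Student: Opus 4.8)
The plan is to read the primitive collection off the signs of the wall relation \eqref{wallrelation}. Split the generators into $P=\{\p_i\mid a_i>0\}$, $P^-=\{\p_i\mid a_i<0\}$, and $\{\p_i\mid a_i=0\}$. Since $a_n>0$ and $a_{n+1}=1$, both $\p_n,\p_{n+1}$ lie in $P$, while $P^-\subseteq\{\p_1,\dots,\p_{n-1}\}=\tau(1)$. First I would dispose of the easy parts of (1). The rays $\p_n\in\sigma\setminus\tau$ and $\p_{n+1}\in\sigma'\setminus\tau$ lie strictly on opposite sides of the hyperplane $\Span(\tau)$ --- this is exactly what $a_n,a_{n+1}>0$ records --- so no cone of $\fan$ contains both, and hence $P$ is not contained in a single cone. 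Moreover a proper subset omitting $\p_{n+1}$ sits inside $\{\p_1,\dots,\p_n\}=\sigma(1)$, and one omitting $\p_n$ sits inside $\{\p_1,\dots,\p_{n-1},\p_{n+1}\}=\sigma'(1)$; these are automatically faces. The only subsets left are those containing both $\p_n,\p_{n+1}$ but omitting an interior positive ray $\p_j$ ($j\le n-1$, $a_j>0$), and this is precisely where extremality must enter.

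To handle these I would pass to the contraction of the extremal ray $R=\R_{\ge0}l_\tau$. Since $\NE(X)$ is strongly convex and $R$ is extremal, there is a nef class $\phi\in\CPL(\fan)$ vanishing exactly on $R$, i.e. $l_\tau(\phi)=0$ while $l_{\tau'}(\phi)>0$ for every interior wall $\tau'$ with $l_{\tau'}\notin R$. Such a $\phi$ is strictly convex across precisely the walls not in $R$, so its maximal domains of linearity are the maximal cones of the coarsened fan obtained by erasing every wall whose class lies in $R$. Let $\hat\sigma$ be the cone of this coarsening containing $\sigma\cup\sigma'$ (glued because $l_\tau\in R$); then $\p_1,\dots,\p_{n+1}\in\hat\sigma$, so $\pos(\p_1,\dots,\p_{n+1})\subseteq\hat\sigma$. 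Every interior wall of $\fan$ inside $\hat\sigma$ fails to be strictly convex for $\phi$ and so has class in $R$, hence its wall relation is a positive multiple of $a_\tau$ (using $N_1(X)=A_{n-1}(X)_\R^*$ in the simplicial case). Because $a_\tau$ is supported on $\{\p_1,\dots,\p_{n+1}\}$, whose $n+1$ vectors have rank $n$ (they contain the basis $\p_1,\dots,\p_n$ of $\sigma$), the only relation among them is a multiple of $a_\tau$; this should force the subdivision of $\hat\sigma$ in $\fan$ to use no rays other than $\p_1,\dots,\p_{n+1}$ and, since it contains $\sigma=\sigma_{n+1}$ and $\sigma'=\sigma_n$, to equal the triangulation $\{\sigma_j\mid j\in P\}$, where $\sigma_j:=\pos(\p_i\mid i\le n+1,\ i\ne j)$ is a cone exactly when $a_j\ne0$. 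Then $\sigma_j\in\fan$ for each $j\in P$, and any proper subset omitting $\p_j$ lies in $\sigma_j(1)$, completing (1). I expect this last identification of the cones refining $\hat\sigma$ --- ruling out extra rays and the opposite ($P^-$-) triangulation --- to be the main obstacle, since it is the one point at which extremality, through the contraction, is genuinely used.

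For (2) I would first locate $\p_1+\cdots+\p_k=\sum_{\p\in P}\p$, which lies in $\pos(\p_1,\dots,\p_{n+1})=\bigcup_{j\in P}\sigma_j$ and hence in some $\sigma_j\in\fan$. Therefore the minimal cone $\sigma_0$ containing it is a face of $\sigma_j$, so the set $S$ of \eqref{eq_prim_nonsimpl} satisfies $S=\sigma_0(1)\subseteq\{\p_1,\dots,\p_{n+1}\}$, and the primitive relation $a_P$, supported on $P\cup S$, is a linear relation among $\p_1,\dots,\p_{n+1}$. Since the space of such relations is one-dimensional and spanned by $a_\tau$, and since $a_P\ne0$ (by Lemma~\ref{poscoeffs} every entry indexed by $P$ is positive), we get $a_P=c\,a_\tau$ for a scalar $c$. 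Comparing the entries indexed by $P$, where both relations are positive, gives $c>0$, which is (2).
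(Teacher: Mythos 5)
Your proposal has two genuine problems: one step is outright false, and the step you yourself flag as ``the main obstacle'' is never supplied; together they leave the heart of the proposition unproven.

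First, the ``easy part'' of (1) is wrong. You claim that because $\p_n$ and $\p_{n+1}$ lie strictly on opposite sides of $\Span(\tau)$, no cone of $\fan$ contains both. Convexity only forces the segment from $\p_n$ to $\p_{n+1}$ to cross the hyperplane $\Span(\tau)$; the crossing point is
\[
\frac{a_n\p_n + a_{n+1}\p_{n+1}}{a_n+a_{n+1}} \;=\; \frac{-\sum_{i\le n-1} a_i\p_i}{a_n+a_{n+1}},
\]
which lies in $\tau$ only when $a_i\le 0$ for all $i\le n-1$, i.e.\ only when $P=\{\p_n,\p_{n+1}\}$. When some $a_j>0$ with $j\le n-1$, there is no contradiction at all, and in fact your claim \emph{must} fail there: if the proposition holds and $|P|>2$, then $\{\p_n,\p_{n+1}\}$ is a proper subset of the primitive collection $P$ and hence \emph{is} contained in a cone of $\fan$. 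Example~\ref{ex_simplicial} of the paper shows this concretely: for the extremal wall $\tau_{1,2}=\sigma_1\cap\sigma_5$ the wall relation is $\p_0+\frac12\p_2+\frac12\p_4=0$, so $\p_n=\p_0$, $\p_{n+1}=\p_4$, and yet $\sigma_4=\pos(\p_0,\p_4,\p_1)\in\fan$ contains both. So even the statement ``$P$ lies in no cone'' genuinely needs strict convexity/extremality; the paper proves it by normalizing a strictly convex $\phi$ to vanish on $\sigma(1)$ and be nonnegative elsewhere, which forces $l_\tau(\phi)\le 0$, contradicting $l_\tau\in\NE(X)\setminus\{0\}$.

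Second, the identification of the cones of $\fan$ subdividing $\hat\sigma$ with the triangulation $\{\Delta_j \mid a_j>0\}$, $\Delta_j=\pos(\p_i\mid i\ne j)$, is not a detail to be expected but is precisely the content of Reid's lemma (p.~403) and \cite[Cor.~2.10]{reid} --- the statements \eqref{reid1} and \eqref{reid2} quoted in the paper's second proof --- and your rank argument does not substitute for it: a wall of $\fan$ interior to $\hat\sigma$ may a priori contain rays outside $\{\p_1,\dots,\p_{n+1}\}$, since such rays can enter its wall relation with coefficient $0$, so proportionality of that relation to $a_\tau$ does not confine the subdivision to those $n+1$ rays, nor does it by itself exclude the ``opposite'' triangulation. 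Ruling these out is where extremality does its real work. For contrast, the paper's first proof (following Kresch) avoids the contraction picture entirely: it extracts a primitive collection $Q\subseteq P$, shows $\beta=a_\tau-\lambda a_Q\in\NE(X)$ for small $\lambda>0$ by the same normalization trick, and lets extremality of $l_\tau$ force $a_Q$ to be a positive multiple of $a_\tau$, whence $Q=P$. Your part (2) does match the paper's second proof and is fine --- but only once part (1) and the structure of the subdivision of $\pos(\p_1,\dots,\p_{n+1})$ have actually been established.
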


\begin{proof} We give two proofs.  The first is
based on \cite[Thm.\ 2.4]{kresch}, which assumes that $X$ is
smooth and complete.  We adapt the argument to the simplicial case as follows.

We first make a useful observation about convex support functions.  If
$\phi$ is convex and $\sigma \in \Sigma$, we can change $\phi$ by a
linear function so that
\begin{equation}
\label{nefdivisor}
\phi(\rho) = 0,\ \rho \in \sigma(1)\ \ \text{and}\ \ \phi(\rho) \ge 0,\
  \rho \notin \sigma(1).
\end{equation}

Now take an extremal wall $\tau$ with wall relation
\eqref{wallrelation}.  Consider the set
\[
P = \{\rho_i \mid a_i > 0\} = \{\rho \mid (a_\tau)_\rho > 0\}.
\]
We will prove that $P$ is a primitive collection whose primitive
relation $l_P$ equals $l_\tau$ up to a positive constant.  Recall that
$a_P = l_P$ and $a_\tau = l_\tau$ since $\Sigma$ is simplicial.

We first prove by contradiction that $P \not\subseteq \sigma(1)$ for
all $\sigma \in \fan$.  Suppose $P \subseteq \sigma(1)$ and take a
strictly convex support function $\phi$.  We may assume that $\phi$ is
of the form \eqref{nefdivisor}.  Since $\phi(\rho) = 0$ for $\rho
\in \Sigma(1)$, we have
\[
\ell_\tau(\phi) = \sum_{\rho \notin \sigma(1)} (a_\tau)_\rho \phi(\rho).
\]
However, $\phi(\rho) \ge 0$ by \eqref{nefdivisor}, and $P \subseteq
\sigma(1)$ implies $(a_\tau)_\rho \le 0$ for $\rho \notin \sigma(1)$.
It follows that $\ell_\tau(\phi) \le 0$, which is impossible since
$\phi$ is strictly convex and $a_\tau = l_\tau \in \NE(X)
\setminus\{0\}$.  Thus no cone of $\Sigma$ contains all rays in $P$.

It follows that some subset $Q \subseteq P$ is a primitive collection.
This gives the primitive relation $a_Q = l_Q \in N_1(X)$, and we also
have $a_\tau \in N_1(X)$.  Let
\[
\beta = a_\tau - \lambda a_Q \in N_1(X),
\]
where $\lambda > 0$.  We claim that if $\lambda$ is sufficiently
small, then
\begin{equation}
\label{betasubset}
\{\rho \mid \b_\rho < 0\} \subseteq \{\rho \mid (a_\tau)_\rho < 0\}.
\end{equation}
To prove this, suppose that $\beta_\rho < 0$ and $(a_\tau)_\rho \ge
0$.  Then the definition of $\beta$ forces $(a_Q)_\rho > 0$, so that
$\rho \in Q$ by Lemma~\ref{poscoeffs}.  Combined with $Q \subseteq P$,
we see that $(a_\tau)_\rho > 0$ by the definition of
$P$.  But we can clearly choose $\lambda$ sufficiently small so that
\[
(a_\tau)_\rho > \lambda (a_Q)_\rho \quad\text{whenever }
(a_\tau)_\rho > 0.
\]
This inequality and the definition of $\beta$ imply $\beta_\rho > 0$,
a contradiction.

We next claim that $\beta \in \NE(X)$.  By \eqref{betasubset}, we
have
\[
\{\rho \mid \beta_\rho < 0\} \subseteq \{\rho \mid (a_\tau)_\rho < 0\}
\subseteq \tau(1),
\]
where the second inclusion follows from \eqref{wallrelation} and the
definition of $a_\tau$.  Now let $\phi$ be convex.  By
\eqref{nefdivisor} with $\sigma = \tau$, we may assume
\[
\phi(\rho) = 0,\ \rho \in
  \tau(1)\ \ \text{and}\ \ \phi(\rho) \ge 0,\ \rho \notin \tau(1).
\]
Then
\[
\beta(\phi) = \sum_{\rho \notin \tau(1)} \phi(\rho) \beta_\rho \ge 0,
\]
where the final inequality follows since $\phi(\rho) \ge 0$ and
$\beta_\rho < 0$ can only happen only when $\rho \in
\tau(1)$.  This proves that $\beta \in \NE(X)$.

Since $a_Q = l_Q \in \NE(X)$ by Proposition~\ref{primrel_unique}, the
equation
\[
a_\tau = \lambda a_\tau  + \beta
\]
expresses $a_\tau = l_\tau$ as a sum of elements of $\NE(X)$.  But
$l_\tau$ is extremal.  This forces $a_Q$ and $\beta$ to lie in the ray
generated by $a_\tau$.  Since $a_Q$ is nonzero, $a_\tau$ is a positive
multiple of $a_Q$.  In particular, they have the same positive
entries.  Then $P = Q$ follows from the definition of $P$ and
Lemma~\ref{poscoeffs}.  This completes the first proof.

The second proof begins with the extremal wall relation
\eqref{wallrelation}.  For $i = 1,\dots,n+1$ set
\[
\Delta_i = \pos(\p_1,...,\widehat{\p_i},...,\p_{n+1}).
\]
In \cite{reid}, Reid proves that
\begin{equation}
\label{reid1}
\bigcup_{a_i > 0} \Delta_i = \pos(\p_i \mid i = 1,\dots,n+1)
\end{equation}
(see the lemma on \cite[p.\ 403]{reid}) and 
\begin{equation}
\label{reid2}
\Delta_i \in \fan(n)\ \text{whenever } a_i > 0
\end{equation}
(see \eqref{reid_prop_2.7} and \cite[Cor.\ 2.10]{reid}).  Reid assumes
that $\fan$ is simplicial and complete.  His proofs generalize to our
situation without change---see \cite{vonRenesse}.

Let $I = \{i \in \{1,\dots,n+1\} \mid a_i > 0\}$, so that $P = \{\p_i
\mid i \in I\}$.  In order to prove that $P$ is a primitive
collection, we first show that $\pos( \p_i \mid i\in I)$ is not a cone
in $\fan$.  So assume $\pos( \p_i \mid i\in I) \in \fan$ and consider
the relation
\[
\sum_{i\in I} a_i \p_i = \sum_{i \in I^c} -a_i \p_{i},
\]
where the coefficients on the left are positive.  Then $\sum_{i\in I}
a_i \p_i$ lies in the relative interior of the cone $\pos( \p_i \mid
i\in I ) \in \fan$, but $\sum_{i \in I^c} -a_i \p_{i}$ lies in the
wall $\tau \in \fan$ since $n,n+1 \in I$ and $a_i \le 0$ for $i \in
I^c$. It follows that $\pos( \p_i \mid i\in I ) \subset \tau$, which
is a contradiction since $\p_n$ and $\p_{n+1}$ do not lie in the wall.

Now we show that every proper subset of $P$ generates a cone of
$\fan$.  Let $K$ be any proper subset of $I$. Then $\pos( \p_i \mid
i\in K )$ is a face of $\Delta_j$ for any $j \in I\setminus K$.  But
$\Delta_j \in \fan$ by \eqref{reid2}. Hence $P= \{\p_i
\mid i \in I\}$ is a primitive collection.

Finally, we consider the primitive relation of $P$, which can be written
\begin{equation}
\label{P_relation}
\sum_{i\in I} \p_i = \sum_{\p \in \sigma(1)} b_\p \p,
\end{equation}
where $\sigma$ is the minimal cone of $\fan$ containing $\sum_{i \in
I} \p_i$.  Since $\pos(\p_1,\dots,\p_{n+1}) = \bigcup_{i\in I}
\Delta_i$ and $\Delta_i \in \fan$ by \eqref{reid1} and \eqref{reid2},
it follows that \eqref{P_relation} is a relation among
$\p_1,\dots,\p_{n+1}$.  This relation is unique up to a nonzero
constant since $\Sigma$ is simplicial.  Thus $a_P$ is a nonzero
multiple of $a_\tau$, necessarily positive by Lemma~\ref{poscoeffs}
and the definition of $P$.
\end{proof}

\begin{remark}
Batyrev clearly knew this proposition, though it is not stated
explicitly in \cite{batyrev}.  Proposition~\ref{reid_primcoll} is
closely related to Theorem 1.5 in Casagrande's paper \cite{casagrande}
and appears implicitly in the remarks preceeding Proposition 2.2 in
Sato's paper \cite{sato}.
\end{remark}

\subsection{The Main Theorem}
We can now prove the simplicial case of our main theorem.

\begin{theorem}
\label{theorem_mori_primitive}
Let $\fan$ be a simplicial quasi-projective fan in $N_\R \cong \R^n$
with convex support of dimension $n$.  Then the cone $\CPL(\fan)$ is
defined by the primitive inequalities, i.e.,
\begin{align*}
\CPL(\fan) = \big\{& \phi \in \PL(\fan) \mid \phi(\p_1 + \cdots + \p_k
) \le \phi(\p_1) + \cdots + \phi(\p_k) \\ &\text{\rm for all
primitive collections}\ \{ \p_1,\dots, \p_k\}\ \text{for}\ \fan\big\}.
\end{align*}
\end{theorem}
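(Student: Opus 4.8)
The plan is to reduce the asserted description of $\CPL(\fan)$ to an equality of cones in $N_1(X)$ and then assemble the pieces already in place. By Proposition~\ref{equivalent}, the displayed equality for $\CPL(\fan)$ is equivalent to the cone equality
\[
\NE(X) = \sum_P \R_{\ge 0}\, l_P,
\]
where $P$ ranges over all primitive collections for $\fan$. So I would prove this equality in $N_1(X)$ and then simply invoke Proposition~\ref{equivalent} to conclude.

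One inclusion is immediate: each $l_P$ lies in $\NE(X)$ by the second part of Proposition~\ref{primrel_unique}, and since $\NE(X)$ is a cone, $\sum_P \R_{\ge 0}\, l_P \subseteq \NE(X)$. The content is the reverse inclusion, and here I would use the quasi-projective hypothesis. Because $X$ is quasi-projective, $\NE(X)$ is strongly convex, while Theorem~\ref{thm_walls_mori} exhibits it as the finite sum $\sum_{\tau \in \fan(n-1)^\circ} \R_{\ge 0}\, l_\tau$. Thus $\NE(X)$ is a strongly convex polyhedral cone, hence the conical hull of its extremal rays, and every extremal ray is spanned by one of the listed generators $l_\tau$; that is, each extremal ray of $\NE(X)$ is $\R_{\ge 0}\, l_\tau$ for some \emph{extremal} wall $\tau$. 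I would then feed each such extremal wall into Proposition~\ref{reid_primcoll}, which produces a primitive collection $P = \{\p_i \mid a_i > 0\}$ whose primitive relation $l_P$ is a positive multiple of $l_\tau$. Consequently every extremal ray of $\NE(X)$ equals $\R_{\ge 0}\, l_P$ for some primitive collection $P$, and therefore lies in $\sum_P \R_{\ge 0}\, l_P$. Since a strongly convex polyhedral cone is generated by its extremal rays, this gives $\NE(X) \subseteq \sum_P \R_{\ge 0}\, l_P$, completing the cone equality and hence the theorem.

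The genuine geometric work lies entirely upstream, in Proposition~\ref{reid_primcoll} and in Theorem~\ref{thm_walls_mori}, both already established; given these, the present proof is a clean assembly. The only step requiring care is the passage from the finite generating set $\{l_\tau\}$ to the extremal rays: one must know that the extremal rays of a polyhedral cone lie among any finite generating set, which in turn relies on the strong convexity of $\NE(X)$ — precisely the point where the quasi-projective hypothesis is used. I expect this bookkeeping, rather than any new input, to be the only delicate part of the argument.
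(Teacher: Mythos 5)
Your proposal is correct and follows essentially the same route as the paper: reduce via Proposition~\ref{equivalent} to the cone equality $\NE(X) = \sum_P \R_{\ge 0}\, l_P$, get one inclusion from Proposition~\ref{primrel_unique}, and get the other by noting that quasi-projectivity makes $\NE(X)$ strongly convex, so it is generated by extremal rays among the $l_\tau$ of Theorem~\ref{thm_walls_mori}, each of which is a primitive relation by Proposition~\ref{reid_primcoll}. Your extra care about why extremal rays must lie among the listed generators $l_\tau$ is a point the paper passes over silently, but it is the same argument.
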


\begin{proof}
By Proposition~\ref{equivalent} it suffices to show that the primitive
relations $l_P$ generate the Mori cone.  We already know that $l_P \in
\NE(X)$ (Proposition~\ref{primrel_unique}) and that $\NE(X)$ is
generated by the extremal wall relations $l_\tau$ (Theorem
\ref{thm_walls_mori}).  Furthermore, $\NE(X)$ is generated by extremal
wall relations since $X$ is quasi-projective. Hence it suffices to
show that every extremal wall relation is a primitive relation.  This
is what we proved in Corollary~\ref{reid_primcoll}, and the theorem
follows.
\end{proof}

Here is an example of Theorem~\ref{theorem_mori_primitive}.

\begin{example}
\label{ex_simplicial}
Figure \ref{figure:counterex_def_h} shows the complete simplicial fan
$\fan$ in $\R^3$ with five minimal cone generators:
\[
\p_0 = (0,0,-1), \ \p_1 = (1,1,1), \ \p_2= (1,-1,1), \
\p_3=(-1,-1,1), \ \p_4=(-1,1,1)
\]
and six maximal cones: 
\begin{align*}
&\sigma_1
= \pos(\p_0, \p_1, \p_2 ),\ \sigma_2 = \pos(\p_0, \p_2, \p_3 ),\
\sigma_3 = \pos( \p_0, \p_3, \p_4 ),\\ 
&\sigma_4 = \pos(\p_0, \p_4, \p_1 ),\ \sigma_5 = \pos(\p_1, \p_2,
\p_4 ),\ \sigma_6 = \pos(\p_2, \p_3, \p_4 ).
\end{align*}
\begin{figure}[htb]
\begin{center}
\begin{picture}(0,0)%
\includegraphics{counterex_def.pstex}%
\end{picture}%
\setlength{\unitlength}{4144sp}%
\begingroup\makeatletter\ifx\SetFigFont\undefined%
\gdef\SetFigFont#1#2#3#4#5{%
  \reset@font\fontsize{#1}{#2pt}%
  \fontfamily{#3}\fontseries{#4}\fontshape{#5}%
  \selectfont}%
\fi\endgroup%
\begin{picture}(945,1527)(1801,-2098)
\put(2746,-691){\makebox(0,0)[lb]{\smash{{\SetFigFont{10}{14.4}{\familydefault}{\mddefault}{\updefault}{\color[rgb]{0,0,0}$\rho_4$}%
}}}}
\put(2476,-2041){\makebox(0,0)[lb]{\smash{{\SetFigFont{10}{14.4}{\familydefault}{\mddefault}{\updefault}{\color[rgb]{0,0,0}$\rho_0$}%
}}}}
\put(2656,-1006){\makebox(0,0)[lb]{\smash{{\SetFigFont{10}{14.4}{\familydefault}{\mddefault}{\updefault}{\color[rgb]{0,0,0}$\rho_3$}%
}}}}
\put(1801,-1006){\makebox(0,0)[lb]{\smash{{\SetFigFont{10}{14.4}{\familydefault}{\mddefault}{\updefault}{\color[rgb]{0,0,0}$\rho_2$}%
}}}}
\put(1936,-736){\makebox(0,0)[lb]{\smash{{\SetFigFont{10}{14.4}{\familydefault}{\mddefault}{\updefault}{\color[rgb]{0,0,0}$\rho_1$}%
}}}}
\end{picture}%
\end{center}
\caption{Simplicial Fan in $\R^3$}
\label{figure:counterex_def_h}
\end{figure}

The primitive collections for this fan are:
\[
P_1 = \{ \p_1, \p_3\},\quad P_2  =\{ \p_0, \p_2, \p_4\},
\]
so that $\phi \in \PL(\fan)$ is convex if and only if 
\begin{align*}
\phi(\p_1) + \phi(\p_3) &\ge \phi(\p_1+ \p_3)\\
\phi(\p_0) + \phi(\p_2) + \phi(\p_4) &\ge \phi(\p_0+\p_2+ \p_4).
\end{align*}

To get a more concrete characterization, we use the associated
primitive relations:
\begin{align*}
P_1: \ &\p_1 + \p_3 = \p_2 + \p_4\\
P_2: \ &\p_0 + \p_2+ \p_4 =
{\textstyle\frac{1}{2}}\p_2 + \textstyle{\frac{1}{2} }\p_4.
\end{align*}
Let $D_i$ be the torus-invariant divisor associated to $\p_i$.  Then
the divisor $D = \sum_{i=0}^4 a_i D_i$ corresponds to the support
function $\phi$ satisfying $\phi(\p_i) = a_i$.  It follows that $D$ is
nef if and only if
\begin{align*}
a_1 + a_3 &\ge a_2+ a_4\\
a_0 + a_2 + a_4 &\ge {\textstyle\frac{1}{2}}a_2 +
\textstyle{\frac{1}{2} }a_4, \ \text{i.e.,}\ 2a_0 + a_2 +a_4 \ge 0. 
\end{align*}

In contrast, $\fan$ has 9 walls, so Theorem~\ref{thm_walls_mori}
describes $\NE(X)$ using 9 generators, corresponding to 9 wall
inequalities defining $\CPL(\fan) \subset \PL(\fan)$.  Fortunately,
these can be simplified considerably.  We denote by $\tau_{i,j}$ the
wall that is spanned by $\p_i$ and $\p_j$.  By abuse of notation we
will also call $\tau_{i,j}$ the corresponding class in $\NE(X)$. One
can compute that the 9 walls fall into three groups:
\begin{align*}
&\tau_{2,4}\\
&\tau_{1,2} \equiv \tau_{3,4} \equiv \tau_{2,3} \equiv
\tau_{1,4} \equiv 4 \tau_{0,1} \equiv 4 \tau_{0,3}\\
&\tau_{0,2}
\equiv \tau_{0,4} \equiv 2\tau_{1,2} + 2\tau_{2,4}.
\end{align*}
Hence $\tau_{2,4}$ and $\tau_{1,2} \equiv \cdots \equiv 4\tau_{0,3}$
give the extremal rays of the Mori cone, while $\tau_{0,2} \equiv
\tau_{0,4}$ do not give an extremal ray.  One can check that the
primitive collection $P_1$ generates the same ray as $\tau_{2,4}$ and
$P_2$ generates the same ray as $\tau_{1,2}$. \hfill \qed
\end{example}

Example~\ref{ex_simplicial} is nice because there were few
primitive collections.  However, there are examples where the
primitive collections vastly outnumber the interior walls.

\begin{example}
\label{polygon_example}
Let $\fan$ be a complete fan in $\R^2$ with $r \ge 4$ minimal
generators, say $\p_1,\dots,\p_r$, arranged counterclockwise around
the origin. Then there are $r$ walls, all interior.  One easily checks
that the primitive collections are given by $P = \{\p_i,\p_j\}$ for $i
< j$ and $\p_i, \p_j$ not adjacent.  Hence the fan $\fan$ has
\[
\binom{r}{2} - r = \frac{r(r-3)}2.
\]
primitive collections.  This is greater than the number of walls
provided $r \ge 6$. \hfill \qed
\end{example}

\section{The Non-Simplicial Case}

\subsection{Simplicial Refinements}
In order to prove our main theorem in the non-simplicial case, we need
to consider simplicial refinements.  Here we present a general theorem
about the existence of simplicial refinements with special properties.

\begin{theorem}
\label{thrm_simpl_ref_general}
Let $\fan$ be a fan in $N_\R \cong \R^n$ with convex support of
dimension $n$ and fix $P \subset \fan(1)$ such that $P \cap
\sigma(1)$ is linearly independent for all $\sigma \in \fan$. Then:
\begin{enumerate}
\item There exists a simplicial refinement $\fan'$ of $\fan$ satisfying
$\fan'(1) = \fan(1)$ such that $P \cap \sigma(1)$ generates a cone
in $\fan'$ for all $\sigma \in \fan$.
\item If in addition $\fan$ is quasi-projective, then the refinement
$\fan'$ in part {\rm (1)} can be chosen to be quasi-projective.
\end{enumerate}
\end{theorem}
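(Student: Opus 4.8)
The plan is to build $\fan'$ as a regular (coherent) subdivision induced by a carefully chosen height function, which turns out to yield both parts from one construction. Define $\omega\colon \fan(1)\to\R_{\ge 0}$ by $\omega_\p = 0$ for $\p\in P$ and $\omega_\p>0$ generic for $\p\notin P$. On each maximal cone $\sigma\in\fan(n)$ let $\psi_\sigma$ be the largest convex function on $\sigma$ with $\psi_\sigma(\p)\le\omega_\p$ for $\p\in\sigma(1)$, namely $\psi_\sigma(u)=\min\{\sum_{\p\in\sigma(1)}\lambda_\p\omega_\p : u=\sum\lambda_\p\p,\ \lambda_\p\ge 0\}$, and let $\fan'_\sigma$ be the subdivision of $\sigma$ into the domains of linearity of $\psi_\sigma$. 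First I would check that these local subdivisions patch into a global fan $\fan'$ refining $\fan$. Since $\tau=\sigma\cap\sigma'$ is a face, a point $u\in\tau$ has no representation using rays outside $\tau$, so $\psi_\sigma\res{\tau}=\psi_{\sigma'}\res{\tau}=\psi_\tau$; the traces of the two subdivisions on the shared wall therefore agree and the pieces glue. Because each $\fan'_\sigma$ lives inside $\sigma$, the result automatically refines $\fan$.

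Next I would verify the three required properties of $\fan'$. For $\fan'(1)=\fan(1)$: every ray $\p$ of $\sigma$ is extremal, hence not a positive combination of the other rays, which forces $\psi_\sigma(\p)=\omega_\p$; thus each $\p$ remains a vertex and no rays are added or lost. For the $P$-condition, fix $\sigma$ and test the zero functional $\ell=0$: it satisfies $\ell(\p)=\omega_\p$ for $\p\in P\cap\sigma(1)$, $\ell(\p)=0<\omega_\p$ for the remaining rays, and $\ell\le\psi_\sigma$ since $\psi_\sigma\ge 0$. Hence the contact locus $\{\psi_\sigma=\ell\}$ is a face of $\fan'_\sigma$, and since a point lies in it exactly when it can be written using only the height-$0$ rays, this face equals $\pos(P\cap\sigma(1))$. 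Linear independence of $P\cap\sigma(1)$ makes this face simplicial, and a generic choice of the remaining heights --- or, if necessary, a subsequent pulling of the non-$P$ rays, which leaves the already-simplicial face $\pos(P\cap\sigma(1))$ untouched --- makes every other face simplicial as well. This proves part (1); note that no convexity or quasi-projectivity of $\fan$ is used, as the patching is purely combinatorial.

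For part (2), assume $\fan$ is quasi-projective and let $\phi_0\in\PL(\fan)$ be strictly convex. The function $\Psi$ with $\Psi\res{\sigma}=\psi_\sigma$ is PL with respect to $\fan'$ and convex on each $\sigma$, but it need not be globally convex, since it may bend the wrong way across the walls of $\fan$. The fix is to use $\Phi=\phi_0+\delta\,\Psi$ for small $\delta>0$. Across a wall of $\fan'$ interior to some $\sigma$, $\phi_0$ is linear while $\Psi=\psi_\sigma$ is strictly convex, so $\Phi$ is strictly convex there; across a wall of $\fan'$ lying inside a wall of $\fan$, $\phi_0$ is strictly convex and this strictness dominates the bounded convexity defect of the PL function $\Psi$ once $\delta$ is small enough. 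Hence $\Phi$ is strictly convex with respect to $\fan'$, so $\fan'$ is quasi-projective while still satisfying part (1). I expect the main obstacle to be precisely these compatibility issues: arranging the local data so the pieces patch into a genuine fan refining $\fan$ while \emph{simultaneously} keeping every ray, forcing each $\pos(P\cap\sigma(1))$ to be a face, and retaining simpliciality --- and, for part (2), controlling the global convexity of $\Phi$ across the old walls through the smallness of $\delta$.
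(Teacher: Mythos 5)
Your proposal is correct, and it rests on the same basic mechanism as the paper's proof: both build $\fan'$ as a regular subdivision induced by ray-wise weights that are extreme (and equal) on $P$ and generic elsewhere, so that $\pos(P\cap\sigma(1))$ is forced to be a cell while genericity kills all non-simplicial cells. The differences are in the implementation, and they buy you something. For part (1), the paper normalizes each maximal cone by a cross-section $H_\sigma=\{\langle m_\sigma,-\rangle=1\}$, scales the vertices by $w_\p\in(0,1]$ with $w_\p=1$ exactly on $P$, and projects the faces of $\mathrm{Conv}(0,w_\p\mathbf{v}_\p^\sigma)$; your lower convex envelope $\psi_\sigma$ of heights $\omega_\p$ (zero exactly on $P$) is the dual description, roughly $\omega=1/w$. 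Because your heights sit on the rays intrinsically, with no cone-dependent normalization, the gluing of the local subdivisions is essentially automatic from your observation that a point of a face is representable only by rays of that face; the paper must prove this compatibility separately (its Property B) via a rescaling map between the two cross-sections. For part (2) your route is genuinely different: the paper re-runs its construction using hyperplanes determined by a strictly convex $\phi$ (first arranged to be positive on $\fan(1)$), obtains $\varphi(\p)=w_\p^{-1}\phi(\p)$ strictly convex \emph{relative} to $\fan$, and invokes \cite{kkms} to conclude that $X_{\fan'}\to X_\fan$ is projective, hence $X_{\fan'}$ quasi-projective; you instead exhibit a globally strictly convex function $\phi_0+\delta\Psi$ directly, which is more elementary and avoids the relative machinery. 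Your smallness argument is sound and can be made precise by noting that across an interior wall $\tau'$ of $\fan'$ the bending functionals of $\phi_0$ and of $\Psi$ both lie in the line $\mathrm{span}(\tau')^{\perp}$: when $\tau'$ lies in a wall of $\fan$ the sum is a positive multiple of the bending of $\phi_0$ for $\delta$ small, and when $\tau'$ is interior to a cone of $\fan$ it is strictly positive for every $\delta>0$; finiteness of the set of walls gives a uniform $\delta$. The one place you are breezier than the paper is simpliciality: one must check that the linear conditions on heights that force a linearly dependent set of rays into a common cell are nontrivial in the non-$P$ heights alone, which is exactly where the linear independence of $P\cap\sigma(1)$ enters a second time (this is the content of the paper's Property C and its careful choice of the $(n+1)$-st vertex); your pulling fallback legitimately covers this, so it is a matter of detail rather than a gap.
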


\begin{proof}
To create $\fan'$, assign a weight $w_\p$ to each $\p \in
\fan(1)$ as follows:
\begin{itemize}
\item For $\p \in P$, set $w_\p = 1$.
\item For $\p \in \fan(1) \setminus P$, pick $0 < w_\p < 1$
  generic.  The exact meaning of generic will be explained in the
  course of the proof.
\end{itemize}

For each cone $\sigma \in \fan(n)$, fix $m_\sigma$ in the interior of
the dual cone $\sigma^\vee \subseteq M_\R$.  The affine hyperplane
$H_\sigma \subset N_\R$ defined by $\langle m_\sigma,-\rangle
= 1$ intersects $\sigma$ in a convex polytope $Q_\sigma = H_\sigma
\cap \sigma$ with vertices $\mathbf{v}_\p^\sigma$, where
$\{\mathbf{v}_\p^\sigma\} = (\R_{\ge0} \p) \cap H_\sigma$ for $\p \in
\sigma(1)$.

The idea is to triangulate $Q_\sigma$ using a variant of the method
used in Example 1.1 of \cite{gkz}, p.\ 215.  Consider
\[
G_{\sigma,w} = \mathrm{Conv}(0,w_\p \mathbf{v}_\p^\sigma \mid \p \in
\sigma(1)).
\]
Since the vectors $w_\p \mathbf{v}_\p^\sigma$ lie on the
1-dimensional rays of $\sigma$, it is easy to see that the vertices of
$G_{\sigma,w}$ consist of the origin and the points $w_\p
\mathbf{v}_\p^\sigma$ for $\p \in \sigma(1)$.  Furthermore, the
faces of $G_{\sigma,w}$ not containing the origin project to a
polyhedral subdivision of $Q_\sigma$.  Projecting from the origin in
$N_\R$, we get a refinement $\fan_{\sigma}$ of $\sigma$ that
satisfies $\fan_{\sigma}(1) = \sigma(1)$.  Figure
\ref{figure:simplicial_proof_weights} shows two 3-dimensional cones
$\sigma$, each with a set $P\cap\sigma(1)$ and a choice of weights
giving the polytope $G_{\sigma,w}$ inside $\sigma$.
\begin{figure}[hbt]
\begin{center}
\begin{picture}(0,0)%
\includegraphics{simplicial_proof_weights.pstex}%
\end{picture}%
\setlength{\unitlength}{4144sp}%
\begingroup\makeatletter\ifx\SetFigFont\undefined%
\gdef\SetFigFont#1#2#3#4#5{%
  \reset@font\fontsize{#1}{#2pt}%
  \fontfamily{#3}\fontseries{#4}\fontshape{#5}%
  \selectfont}%
\fi\endgroup%
\begin{picture}(3914,3619)(1059,-3008)
\put(1756, 29){\makebox(0,0)[lb]{\smash{{\SetFigFont{10}{14.4}{\familydefault}{\mddefault}{\updefault}{\color[rgb]{0,0,0}$p_1$}%
}}}}
\put(1351,-1006){\makebox(0,0)[lb]{\smash{{\SetFigFont{10}{14.4}{\familydefault}{\mddefault}{\updefault}{\color[rgb]{0,0,0}$p_2$}%
}}}}
\put(1296,479){\makebox(0,0)[lb]{\smash{{\SetFigFont{10}{14.4}{\familydefault}{\mddefault}{\updefault}{\color[rgb]{0,0,0}$P\cap\sigma(1)=\{p_1, p_2\}$}%
}}}}
\put(3736, 29){\makebox(0,0)[lb]{\smash{{\SetFigFont{10}{14.4}{\familydefault}{\mddefault}{\updefault}{\color[rgb]{0,0,0}$p_1$}%
}}}}
\put(3421,-1006){\makebox(0,0)[lb]{\smash{{\SetFigFont{10}{14.4}{\familydefault}{\mddefault}{\updefault}{\color[rgb]{0,0,0}$p_2$}%
}}}}
\put(4231,-1096){\makebox(0,0)[lb]{\smash{{\SetFigFont{10}{14.4}{\familydefault}{\mddefault}{\updefault}{\color[rgb]{0,0,0}$p_3$}%
}}}}
\put(3296,479){\makebox(0,0)[lb]{\smash{{\SetFigFont{10}{14.4}{\familydefault}{\mddefault}{\updefault}{\color[rgb]{0,0,0}$P\cap\sigma(1)=\{p_1, p_2, p_3\}$}%
}}}}
\end{picture}%
\caption{Two examples of $\sigma$, $P\cap\sigma(1)$, and a choice of weights.}
\label{figure:simplicial_proof_weights}
\end{center}
\end{figure}

We claim that the fans
$\fan_{\sigma}$ have the following three properties:
\begin{itemize}
\item[{A}.] $P\cap \sigma(1)$ generates a cone of
$\fan_{\sigma}$ for all $\sigma \in \fan$.
\item[{B}.] If a face $\tau$ lies in maximal cones $\sigma$ and
$\sigma'$, then $\fan_{\sigma}$ and $\fan_{\sigma'}$ induce the
same refinement of $\tau$.
\item[{C}.] If the $w_\p$ are sufficiently generic for $\p \in
\fan(1) \setminus P$, then $\fan_{\sigma}$ is simplicial for
all $\sigma \in \fan$.
\end{itemize}

Assuming {A}--{C}, the set
\[
\fan' = \bigcup_{\sigma \in \fan} \fan_{\sigma}
\]
is a fan that refines $\fan$ by {B} and satisfies $\fan'(1) =
\fan(1)$.  Furthermore, $\fan'$ is simplicial by {C}.  Finally,
given $\sigma \in \fan$, $P\cap \sigma(1)$ generates a cone of
$\fan'$ by {A}. Hence the proof of part (1) of the theorem will be
complete once we prove {A}--{C}.

\emph{Proof of {A}}.  Consider the hyperplane $H_\sigma \subset N_\R$.
Since $w_\p \le 1$ for all $\p$, the polytope $G_{\sigma,w}$ lies
on the side of $H_\sigma$ containing the origin, and the intersection
$H_\sigma \cap G_{\sigma,w}$ is clearly the convex hull of the points
$\mathbf{v}_\p^\sigma$ for $\p \in P\cap\sigma(1)$ by the choice
of the weights $w_\p$.  It follows that $P\cap\sigma(1)$ generates a
cone of $\fan_{\sigma}$.

\emph{Proof of {B}}.  Suppose $\tau$ is a face of $\sigma$.  Set
\[
G_{\tau,\sigma,w} = \mathrm{Conv}(0,w_\p \mathbf{v}_\p^\sigma \mid
\p \in \tau(1))
\]
and observe that
\[
G_{\tau,\sigma,w} = G_{\sigma,w}\cap \tau.
\]
This tells us that the refinement of $\tau$ induced by $\fan_\sigma$
is determined entirely by the vectors $w_\p \mathbf{v}_\p^\sigma$
for $\p \in \tau(1)$.  Since we are working with fixed weights in
this part of the proof, the only choice involved is this refinement of
$\tau$ comes from the vectors $\mathbf{v}_\p^\sigma$ for $\p \in
\tau(1)$.  Recall that $\{\mathbf{v}_\p^\sigma\} = \p \cap
H_\sigma$, where $H_\sigma$ is defined by $\langle m_\sigma,-\rangle =
1$ and $m_\sigma \in \sigma^\vee$.

Now suppose that $\tau$ is a face of $\sigma'$ for a maximal cone
$\sigma' \ne \sigma$.  The refinement of $\tau$ induced by
$\fan_{\sigma'}$ is determined by
\[
G_{\tau,\sigma',w} = \mathrm{Conv}(0,w_\p
\mathbf{v}_\p^{\sigma'} \mid \p \in \tau(1)),
\]
where $m_{\sigma'} \in {\sigma'}^\vee$ determines the affine
hyperplane $H_{\sigma'}$ whose intersection with $\p \in \tau(1)$
determines $\mathbf{v}_\p^{\sigma'}$.  

Now consider the map $\varphi : \tau \to \tau$ defined as follows:
\begin{itemize}
\item If $\mathbf{v} \in H_\sigma\cap \tau$, then $\{\varphi(\mathbf{v})\}
  = \mathbb{R}_{\ge 0}\mathbf{v} \cap H_{\sigma'}$.
\item If $\mathbf{v} \in H_\sigma\cap \tau$ and $\lambda \ge 0$, then
$\varphi(\lambda \mathbf{v}) = \lambda\varphi(\mathbf{v})$.
\end{itemize}
The map $\varphi$ sends line segments to line segments and is
homogeneous of degree $1$ for non-negative scalars.  Furthermore,
$\varphi(\mathbf{v}_\p^\sigma) = \mathbf{v}_\p^{\sigma'}$, and thus
$\varphi(w_\p\mathbf{v}_\p^\sigma) = w_\p
\mathbf{v}_\p^{\sigma'}$ for all $\p \in \tau(1)$.  It follows that
\[
\varphi(G_{\tau,\sigma,w}) = G_{\tau,\sigma',w}.
\]
This map preserves faces, so that $G_{\tau,\sigma,w}$ and
$G_{\tau,\sigma',w}$ induce the same refinement of $\tau$.  This
completes the proof of {B}.

\emph{Proof of {C}}.  We may assume $\sigma \in \Sigma(n)$.  For $\p
\in \sigma(1)$, write 
\[
\mathbf{v}_\p^\sigma = (a^\p_1,\dots,a^\p_n) \in \mathbb{R}^n.
\]
When we have $\mathbf{v}_{\p_1}^\sigma,
\mathbf{v}_{\p_{2}}^\sigma$, \dots, we write instead
\[
\mathbf{v}_{\p_i}^\sigma = (a^{(i)}_1,\dots,a^{(i)}_{n})
\]
and we set $w_i = w_{\p_i}$.

Now suppose that $\fan_\sigma$ is non-simplicial for some choice of
weights $w_\p$.  This implies that $G_{\sigma,w}$ has a face $F$ of
dimension $n-1$ not containing the origin that is not an
$(n-1)$-simplex.  It follows that $F$ has at least $n+1$ vertices.
Pick $n+1$ vertices of $F$ as follows.  We first pick those vertices
of $F$ of the form $\mathbf{v}_\p^\sigma =
w_\p\mathbf{v}_\p^\sigma$ for $\p \in P \cap \sigma(1)$.  There
are at most $n$ such vertices since $P \cap \sigma(1)$ is linearly
independent.  Since $\dim(F) = n-1$, we can extend them to affinely
independent vertices $w_1 \mathbf{v}_{\p_1}^\sigma,\dots,w_n
\mathbf{v}_{\p_{n}}^\sigma$.  Since $F$ is non-simplicial, we can
pick one more, $w_{n+1} \mathbf{v}_{\p_{n+1}}^\sigma$.

Now consider the $(n+1)\times (n+1)$ matrix 
\[
\begin{pmatrix} 1 & w_1 a^{(1)}_1 & \cdots & w_1 a^{(1)}_{n}\\
\vdots & \vdots && \vdots \\
1 & w_{n+1}a^{(n+1)}_1 & \cdots & w_{n+1}a^{(n+1)}_{n}\end{pmatrix}.
\]
By construction, the vectors
$w_1\mathbf{v}_{\p_1}^\sigma,\dots,w_n\mathbf{v}_{\p_{n}}^\sigma$
are affinely independent, but once we add the last vector,
$w_1\mathbf{v}_{\p_1}^\sigma,\dots,w_{n+1}\mathbf{v}_{\p_{n+1}}^\sigma$
are affinely dependent since they lie in a $(n-1)$-dimensional face.

It follows that this matrix has rank exactly $n$.  Since the weights
$w_i$ are nonzero, the same is true for the matrix
\[
M = \begin{pmatrix} w_1^{-1} &  a^{(1)}_1 & \cdots & a^{(1)}_{n}\\
\vdots & \vdots && \vdots \\
w_{n+1}^{-1} & a^{(n+1)}_1 & \cdots & a^{(n+1)}_{n}\end{pmatrix}.
\]
The determinant of $M$ must vanish.  The resulting linear equation in
the $w_i^{-1}$ gives a necessary condition for $\fan_\sigma$ to be
non-simplicial.  Furthermore, our careful choice of
$w_1\mathbf{v}_{\p_1}^\sigma, \dots,
w_{n+1}\mathbf{v}_{\p_{n+1}}^\sigma$ guarantees that $w_{n+1}^{-1}$
actually appears in the determinant and that $w_{n+1}^{-1} =
w_\p^{-1}$ for some $\p \in \sigma(1)\setminus P$.

Since there are only finitely many maximal cones in $\fan$ and for
each $\sigma$ we have a fixed choice of the $\mathbf{v}_{\p}^\sigma$
(determined by the fixed choice of $m_\sigma \in \sigma^\vee$), we get
a finite system of non-trivial linear equations in the $w_\p^{-1}$
for $\p \in \sigma(1)\setminus P$ that give necessary conditions for
$\fan_\sigma$ to be non-simplicial.  If we pick the weights $w_\p$
to avoid these finitely many subspaces, the resulting subdivisions
$\fan_\sigma$ will be all simplicial.  This completes the proof of
{C}, and part (1) follows. 

Turning to part (2), we assume that $\fan$ is quasi-projective.  It
suffices to find a simplicial refinement $\fan'$ of $\fan$ satisfying
$\fan'(1) = \fan(1)$ such that the induced map $X_{\fan'} \to X_\fan =
X$ is projective.  The latter happens when $\fan'$ has a
piecewise-linear function $\varphi \in \PL(\fan')$ which is strictly
convex relative to $\fan$, meaning that for all $\sigma \in \fan$,
$\varphi\res{\sigma}$ is strictly convex with respect to the subfan
$\{\sigma' \in \fan' \mid \sigma' \subset \sigma\}$ (see (*) on page
27 and Theorem 10 on pages 31--32 of \cite{kkms}).

Since $\fan$ is quasi-projective, we can find $\phi \in \CPL(\fan)$
which is strictly convex.  We first modify $\phi$ so that it
takes positive values on $\fan(1)$.  To see why this is possible, 
consider the cone 
\[
\widehat\sigma = \pos((\p,\phi(\p)) \mid \p
\in \fan(1)) \subset N_\R \times \R.
\] 
Since $\phi$ is strictly convex for $\fan$, it follows that
$\widehat\sigma$ is a strongly convex cone with minimal generators
given by $(\p,\phi(\p))$ for $\p \in \fan(1)$.  Hence we can find
$(m,\mu) \in M_\R \times \R$ such that 
\[
\langle m,\p\rangle + \mu\phi(\p) > 0 \ \text{for all}\ \p \in
\fan(1).
\]
Replacing $\phi$ with $\langle m,-\rangle + \mu\phi$, we may assume
$\phi(\p) > 0$ for all $\p \in \fan(1)$, as claimed.

The proof of part (1) used a hyperplane $H_\sigma \subset N_\R$ for each
$\sigma \in \fan(n)$.  More precisely, we picked $m_\sigma \in
\sigma^\vee$ such that $H_\sigma = \{ u \in N_\R \mid \langle
m_\sigma,u\rangle = 1\}$, and then for $\p \in \sigma(1)$,
$\mathbf{v}_\p^\sigma$ was the unique vector in $\R_{\ge 0}\p$
satisfying $\langle m_\sigma,\mathbf{v}_\p^\sigma\rangle = 1$.

Using $\phi$, we get a consistent set of hyperplanes since
$\phi\res{\sigma}$ is linear, i.e., $\phi(u) = \langle
m_\sigma,u\rangle$ for some $m_\sigma \in M_\R$.  Our hypothesis that
$\phi(\p) > 0$ for all $\p \in \fan(1)$ guarantees that $m_\sigma$ is
in the interior of $\sigma^\vee$.  Hence we can use these $m_\sigma$'s
to give the hyperplanes $H_\sigma$.  Then the point
$\mathbf{v}_\p^\sigma$ is the unique vector in $\R_{\ge 0}\p$
satisfying $\phi(\mathbf{v}_\p^\sigma) = 1$.

Now pick generic weights $w_\p$ for $\p \in \fan(1) \setminus P$.
Then the properties A--C are satisfied (note B is now trivial because
of our consistent choice of the $H_\sigma$).  Thus we get a
simiplicial refinement $\fan'$ of $\fan$ that satisifes part (1) of
the theorem.  Now define $\varphi : |\fan'| = |\fan| \to \R$ by
setting
\[
\varphi(\p) = w_\p^{-1}\phi(\p),\quad \p \in \fan'(1) = \fan(1),
\]
and extending linearly on each cone $\sigma' \in \fan'$.  This gives a
well-defined function in $\PL(\fan')$ since $\fan'$ is simplicial.
Assuming $\phi$ is rational, we can also assume that the $w_\p$ are
rational.  Hence we can assume that $\varphi$ is rational as well.

We claim that $\varphi$ is strictly convex with respect to
$\fan_\sigma = \{\sigma' \in \fan' \mid \sigma' \subset \sigma\}$ for
each $\sigma \in \fan$.  To see this, first observe that
\[
\varphi(w_\p\mathbf{v}_\p^\sigma) = 1, \quad \p \in \sigma(1),
\]
since $\phi(\mathbf{v}_\p^\sigma) = 1$.  It follows that inside
$\sigma$, the inequality $\varphi \le 1$ defines
\[
G_{\sigma,w} = \mathrm{Conv}(0,w_\p \mathbf{v}_\p^\sigma \mid \p \in
\sigma(1)).
\]
Then the convexity of $G_{\sigma,w}$ implies that if $u,v \in \sigma$,
then
\[
\varphi(u) + \varphi(v) \ge \varphi(u+v),
\]
with equality if and only if $u,v$ lie in the same cone of
$\fan_\sigma$.  To prove this, we may assume $u,v \ne 0$, so that
\[
u = \lambda u_0,\ v = \mu v_0,\quad \text{where}\ \lambda,\mu > 0 \
\text{and}\ \varphi(u_0) = \varphi(v_0) = 1.
\]
Then $\frac{\lambda}{\lambda+\mu} u_0 + \frac{\mu}{\lambda+\mu} v_0
\in G_{\sigma,w}$, so that
\[
\varphi(u+v) = (\lambda+\mu)\varphi\big({\textstyle
  \frac{\lambda}{\lambda+\mu} u_0 + \frac{\mu}{\lambda+\mu} v_0}\big) \le
  \lambda+\mu =  \varphi(u)+\varphi(v).
\]
It is equally easy to show that equality occurs exactly when $u,v$ lie
in the same cone of $\fan_\sigma$.  Hence $\varphi$
has the required properties, which completes the proof of part~(2) of
the theorem.
\end{proof}

\begin{corollary}
\label{cor_simpl_ref_general}
If $\fan$ is a fan in $N_\R \cong \R^n$ with convex support of
dimension $n$, then there exists a simplicial refinement $\fan'$ with
the same 1-dimensional generators.  Furthermore, if $\fan$ is
quasi-projective, then we can assume that $\fan'$ is also
quasi-projective.
\end{corollary}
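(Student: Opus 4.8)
The plan is to derive this corollary directly from Theorem~\ref{thrm_simpl_ref_general} by taking $P = \emptyset$. First I would check that the empty collection is an admissible choice for $P$: the hypothesis of Theorem~\ref{thrm_simpl_ref_general} requires that $P \cap \sigma(1)$ be linearly independent for every $\sigma \in \fan$, and when $P = \emptyset$ this intersection is empty for all $\sigma$, hence linearly independent. Thus Theorem~\ref{thrm_simpl_ref_general} applies to $\fan$ with no assumptions beyond those already present in the corollary (convex support of dimension $n$).

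Next I would read off the two conclusions. Part (1) of Theorem~\ref{thrm_simpl_ref_general} produces a simplicial refinement $\fan'$ of $\fan$ with $\fan'(1) = \fan(1)$; the additional assertion that $P \cap \sigma(1)$ generate a cone of $\fan'$ is vacuous when $P = \emptyset$, so it imposes no constraint. This is precisely the statement that $\fan'$ is a simplicial refinement with the same $1$-dimensional generators. For the second sentence of the corollary, I would invoke part (2): when $\fan$ is in addition quasi-projective, the refinement $\fan'$ may be chosen quasi-projective as well.

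There is essentially no obstacle here, since all the genuine work---the weighted triangulation via the polytopes $G_{\sigma,w}$ and the verification of properties A--C, together with the strictly-relative-convex function $\varphi$ in the quasi-projective case---has already been carried out in the proof of Theorem~\ref{thrm_simpl_ref_general}. The only point requiring comment is the (trivial) observation that $P = \emptyset$ satisfies the linear independence hypothesis, after which the corollary is immediate.
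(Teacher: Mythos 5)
Your proposal is correct and is exactly the paper's own proof: the paper also simply applies Theorem~\ref{thrm_simpl_ref_general} with $P=\emptyset$. Your extra remark that the empty set vacuously satisfies the linear independence hypothesis is a fine (if routine) elaboration of the same one-line argument.
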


\begin{proof}
Apply Theorem \ref{thrm_simpl_ref_general} with $P=\emptyset$.
\end{proof}

\begin{remark}
\label{rem_simpl_ref_general}
This corollary guarantees the existence of simplicial refinements that
introduce no new generators and preserve quasi-projectivity.  This
result has other proofs, including Fujino \cite{fujino} (via the toric
Mori program) and Thompson \cite{thompson} (via stellar subdivision).
\end{remark}

\subsection{The Main Theorem} We can now prove the non-simplicial case of
our main theorem.

\begin{theorem}
\label{theorem_mori_nonsimp}
Let $\fan$ be a non-simplicial quasi-projective fan in $N_\R \cong
\R^n$ with convex support of dimension $n$.  Then the cone
$\CPL(\fan)$ is defined by the primitive inequalities, i.e.,
\begin{align*}
\CPL(\fan) = \big\{& \phi \in \PL(\fan) \mid \phi(\p_1 + \cdots + \p_k
) \le \phi(\p_1) + \cdots + \phi(\p_k) \\ &\text{\rm for all
primitive collections} \ \{ \p_1,\dots, \p_k\} \ \text{for}\ \fan\big\}.
\end{align*}
\end{theorem}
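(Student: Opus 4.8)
The plan is to deduce the non-simplicial case from the already-established simplicial case (Theorem~\ref{theorem_mori_primitive}) by passing to a simplicial refinement. Since \eqref{cone_inclusion} already gives the inclusion $\subseteq$, I would only need to prove the reverse inclusion: if $\phi \in \PL(\fan)$ satisfies every primitive inequality for $\fan$, then $\phi \in \CPL(\fan)$. I would argue by contradiction, assuming $\phi$ is not convex.

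First I would invoke Corollary~\ref{cor_simpl_ref_general} to choose a simplicial quasi-projective refinement $\fan'$ of $\fan$ with $\fan'(1) = \fan(1)$. The crucial observation is that convexity, as defined here, depends only on the common support $|\fan'| = |\fan|$ and not on the fan structure. Since $\fan'$ refines $\fan$, the function $\phi$ is linear on each (smaller) cone of $\fan'$ and hence lies in $\PL(\fan')$; moreover $\phi$ is convex as an element of $\PL(\fan)$ if and only if it is convex as an element of $\PL(\fan')$. Thus the assumption $\phi \notin \CPL(\fan)$ yields $\phi \notin \CPL(\fan')$.

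Applying the simplicial main theorem to $\fan'$ then produces a primitive collection $P' = \{\p_1,\dots,\p_k\}$ for $\fan'$ whose primitive inequality fails, i.e. $\phi(\p_1) + \cdots + \phi(\p_k) < \phi(\p_1 + \cdots + \p_k)$. The heart of the argument is to show that this same $P'$ is a primitive collection for the coarser fan $\fan$. On one hand, every proper subset of $P'$ lies in a cone of $\fan'$, hence in a cone of $\fan$, because each cone of $\fan'$ is contained in a cone of $\fan$. On the other hand, $P'$ cannot lie in a single cone $\sigma \in \fan$: if it did, then $\phi$ would be linear on $\sigma$ and the strict inequality above would collapse to an equality, a contradiction. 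By Definition~\ref{pcdef}, $P'$ is therefore a primitive collection for $\fan$, and the displayed strict inequality directly contradicts the hypothesis that $\phi$ satisfies every primitive inequality for $\fan$. This forces $\phi \in \CPL(\fan)$ and completes the proof.

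The main obstacle, and the step I expect to require the most care, is verifying that the violating primitive collection $P'$ for the refinement is genuinely a primitive collection for $\fan$. This is exactly where the two inputs combine: the refinement property controls the proper subsets, while the strict violation together with the piecewise-linearity of $\phi$ on the cones of $\fan$ rules out $P'$ sitting inside a single cone of $\fan$. Everything else is bookkeeping resting on $\fan'(1) = \fan(1)$, which guarantees that the primitive inequalities for $\fan'$ and for $\fan$ are expressed in terms of the same values $\phi(\p)$ and hence transfer without modification.
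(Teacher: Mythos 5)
Your proof is correct and takes essentially the same route as the paper: both pass to a quasi-projective simplicial refinement $\fan'$ with $\fan'(1)=\fan(1)$ via Corollary~\ref{cor_simpl_ref_general}, apply the simplicial case (Theorem~\ref{theorem_mori_primitive}), and use the dichotomy between primitive collections for $\fan'$ that lie in a cone of $\fan$ (which give equalities for $\phi\in\PL(\fan)$, the paper's Type~A) and those that do not (which are primitive collections for $\fan$, the paper's Type~B). Your contradiction argument is just the contrapositive packaging of this same dichotomy; the paper's direct formulation additionally records that the Type~B inequalities alone suffice to cut out $\CPL(\fan)$.
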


\begin{proof} By Corollary~\ref{cor_simpl_ref_general}, $\fan$ has a
quasi-projective simplicial refinement $\fan'$ satisfying $\fan'(1) =
\fan(1)$.  Then observe that
\[
\CPL(\fan) = \PL(\fan) \cap \CPL(\fan')
\]
and that
\begin{align*}
\CPL(\fan') = \big\{& \phi \in \PL(\fan') \mid \phi(\p_1 + \dots +
\p_k ) \le \phi(\p_1) + \cdots + \phi(\p_k) \\
&\text{\rm for all primitive collections} \ \{ \p_1,\dots,
\p_k\}\ \text{\rm for} \ \fan'\big\}
\end{align*}
since $\fan'$ is simplicial and quasi-projective.  Hence
\begin{equation}
\label{CPL_eq}
\begin{aligned}
\CPL(\fan) = \big\{& \phi \in \PL(\fan) \mid \phi(\p_1 + \cdots + \p_k
) \le \phi(\p_1) + \cdots + \phi(\p_k) \\ &\text{\rm for all
primitive collections} \ \{ \p_1,\dots, \p_k\}\ \text{\rm for} \ \fan'\big\}.
\end{aligned}
\end{equation}

We divide primitive collections $P = \{ \p_1,\dots, \p_k\}$ for $\fan'$
into two types:
\begin{align*}
\text{Type A:}\ &P \subset \sigma(1)\ \text{for some} \ \sigma \in \fan\\
\text{Type B:}\ &P \not\subset \sigma(1)\ \text{for all} \ \sigma \in \fan.
\end{align*}
Note that if $\phi \in \PL(\fan)$, then $\phi(\p_1 + \cdots + \p_k ) =
\phi(\p_1) + \cdots + \phi(\p_k)$ when $P = \{\p_1,\dots,\p_k\}$ is a
Type A primitive collection for $\fan'$.  Hence these can be omitted
in \eqref{CPL_eq}, so that
\begin{equation}
\label{CPL_eq2}
\begin{aligned}
\CPL(\fan) = \big\{& \phi \in \PL(\fan) \mid \phi(\p_1 + \cdots + \p_k
) \le \phi(\p_1) + \cdots + \phi(\p_k) \\ &\text{\rm for all Type B
primitive collections} \ \{ \p_1,\dots, \p_k\}\ \text{\rm for} \
\fan' \big\}.
\end{aligned}
\end{equation}
However, a Type B primitive collection $P$ for $\fan'$ is a primitive
collection for $\fan$.  This is easy to prove.  First, $P$ is not
contained in any cone of $\fan$ by the definition of Type~B, and
second, every proper subset of $P$ is contained in a cone of $\fan'$
and hence lies in a cone of $\fan$ since $\fan'$ refines $\fan$.  It
follows that
\[
\phi(\p_1 + \cdots + \p_k) \le \phi(\p_1) + \cdots + \phi(\p_k)
\]
is a primitive inequality for $\fan$ whenver $P= \{ \p_1,\dots, \p_k\}$
is a Type B primitive collection for $\fan'$.

Hence \eqref{CPL_eq2} shows that a subset of the primitive
inequalities for $\fan$ define $\CPL(\fan)$ inside $\PL(\fan)$.  Using
the inclusion \eqref{cone_inclusion}, the theorem now follows immediately.
\end{proof}

Here is an example to illustrate Theorem~\ref{theorem_mori_nonsimp}
and its proof.

\begin{example}
\label{nonsimpl_ex}
Figure~\ref{figure:counterex_def} shows the complete non-simplicial
fan $\fan$ in $\R^3$ with five minimal generators:
\begin{equation*}
\p_0 = (0,0,-1),\ \p_1 = (1,1,1),\ \p_2= (1,-1,1), \;
\p_3=(-1,-1,1),\ \p_4=(-1,1,1)
\end{equation*}
and five maximal cones:
\begin{align*}
&\sigma_1 = \pos( \p_0, \p_1, \p_2 ), 
\sigma_2 = \pos( \p_0, \p_2, \p_3 ), 
\sigma_3 = \pos( \p_0, \p_3, \p_4 ),\\
&\sigma_4 = \pos( \p_0, \p_4, \p_1 ), 
\sigma_5 = \pos( \p_1, \p_2, \p_3, \p_4 ).
\end{align*}
\begin{figure}[htb]
\begin{center}
\begin{picture}(0,0)%
\includegraphics{counterex_def_nonsimpl.pstex}%
\end{picture}%
\setlength{\unitlength}{4144sp}%
\begingroup\makeatletter\ifx\SetFigFont\undefined%
\gdef\SetFigFont#1#2#3#4#5{%
  \reset@font\fontsize{#1}{#2pt}%
  \fontfamily{#3}\fontseries{#4}\fontshape{#5}%
  \selectfont}%
\fi\endgroup%
\begin{picture}(945,1527)(1801,-2098)
\put(2746,-691){\makebox(0,0)[lb]{\smash{{\SetFigFont{10}{14.4}{\familydefault}{\mddefault}{\updefault}{\color[rgb]{0,0,0}$\rho_4$}%
}}}}
\put(2476,-2041){\makebox(0,0)[lb]{\smash{{\SetFigFont{10}{14.4}{\familydefault}{\mddefault}{\updefault}{\color[rgb]{0,0,0}$\rho_0$}%
}}}}
\put(2656,-1006){\makebox(0,0)[lb]{\smash{{\SetFigFont{10}{14.4}{\familydefault}{\mddefault}{\updefault}{\color[rgb]{0,0,0}$\rho_3$}%
}}}}
\put(1801,-1006){\makebox(0,0)[lb]{\smash{{\SetFigFont{10}{14.4}{\familydefault}{\mddefault}{\updefault}{\color[rgb]{0,0,0}$\rho_2$}%
}}}}
\put(1936,-736){\makebox(0,0)[lb]{\smash{{\SetFigFont{10}{14.4}{\familydefault}{\mddefault}{\updefault}{\color[rgb]{0,0,0}$\rho_1$}%
}}}}
\end{picture}%
\caption{Non-Simplicial Fan in $\R^3$}
\label{figure:counterex_def}
\end{center}
\end{figure}

The primitive collections for this fan are:
\[
P_1 = \{\p_0, \p_1, \p_3\},\quad P_2= \{ \p_0, \p_2, \p_4\}.
\]
A first observation is that if we used Batyrev's definition of
primitive collection in this case, we would want every proper subset
of $P_1$ and $P_2$ to generate a cone of $\fan$.  This clearly isn't
true, and in fact this example has \emph{no} primitive collections if
we use Batyrev's definition.  This explains why Definition~\ref{pcdef}
is the correct definition in the non-simplicial case.

Theorem~\ref{theorem_mori_nonsimp} states that $\CPL(\fan) \subset
\PL(\fan)$ is defined by the primitive inequalities coming from the
primitive collections $P_1$ and $P_2$.  However, the proof of the
theorem shows that we need only one.  To see why, consider the
simplicial refinement $\fan'$ of $\fan$ given by subdividing
non-simplicial cone $\sigma_5$ along $\pos(\p_2,\p_4)$.  This gives
the fan pictured in Example~\ref{ex_simplicial}.  The fan $\fan'$ has
the same generators $\p_0,\dots,\p_4$ as $\fan$, and the primitive
collections for $\fan'$ are
\[
P_1' = \{\p_1, \p_3\},\quad P_2= \{ \p_0, \p_2, \p_4\}.
\]
One easily checks that $P_1'$ is of Type A and $P_2$ is of Type B and
hence is a primitive collection for $\fan$.  By \eqref{CPL_eq2},
$\CPL(\fan)$ is defined by $P_2$, so that $\phi \in \PL(\fan)$ is
convex if and only if
\[
\phi(\p_0) + \phi(\p_2) + \phi(\p_4) \ge  \phi(\p_0 + \p_2+\p_4).
\]

It is interesting to note that the Type A primitive collection $P_1' =
\{\p_1, \p_3\}$ also plays an important role.  The primitive relation
of $P_1'$ is 
\[
\p_1 + \p_3 = \p_2 + \p_4.
\]
Now take $\phi \in \PL(\fan)$.  As noted in the proof of
Theorem~\ref{theorem_mori_nonsimp}, this Type A primitive collection
gives the \emph{equality}
\begin{equation}
\label{phi_equality}
\phi(\p_1) + \phi(\p_3) = \phi(\p_1 + \p_3),
\end{equation}
which by the above primitive relation implies
\[
\phi(\p_1) + \phi(\p_3) = \phi(\p_2) + \phi(\p_4).  
\]
It is easy to see that this equality defines $\PL(\fan)$ inside of
$\PL(\fan')$.  In other words, $\phi \in \PL(\fan')$ lies in
$\PL(\fan)$ if and only if it satisfies \eqref{phi_equality} coming
from the Type A primitive collection for $\fan'$.

If we turn our attention to the other primitive collection $P_1 =
\{\p_0, \p_1, \p_3\}$ for $\fan$, then one easily sees that $\phi \in
\PL(\fan)$ is convex if and only if
\[
\phi(\p_0) + \phi(\p_1) + \phi(\p_3) \ge  \phi(\p_0 + \p_1+\p_3).
\]
This follows by considering the other simplicial refinement of $\fan$
obtained by subdividing $\sigma_5$ along $\pos(\p_1,\p_3)$.  \hfill
\qed
\end{example}

Example~\ref{nonsimpl_ex} has some interesting features:
\begin{itemize}
\item Every primitive collection for $\fan$ comes from a
Type B primitive collection for a simplicial refinement $\fan'$ of
$\fan$ satisfying $\fan'(1) = \fan(1)$.
\item For each such refinement  $\fan'$ of
$\fan$, the Type A primitive collections for $\fan'$ define $\PL(\fan)
  \subset \PL(\fan')$. 
\end{itemize}
We will see below that these properties hold in general.  

\subsection{Properties of Primitive Collections}
We begin with the following useful property of primitive collections.

\begin{proposition}
\label{thm_strict}
Let $\fan$ be a fan in $N_\R \cong \R^n$ such that $\fan$ has convex
support of dimension $n$.  If $P$ is a primitive collection for $\fan$,
then every proper subset $Q$ of $P$ is linearly independent.
\end{proposition}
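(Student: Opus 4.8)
The plan is to argue by contradiction: I will assume that some proper subset $Q\subsetneq P$ is linearly dependent and derive that all of $P$ lies in a single cone of $\fan$, contradicting Definition~\ref{pcdef}. Since every subset of a linearly independent set is independent, there is no need to reduce to maximal proper subsets; what I really use is a single nontrivial relation $\sum_{i\in Q} e_i\,\p_i = 0$ among the primitive generators of $Q$. Because $Q$ is a proper subset of the primitive collection $P$, Definition~\ref{pcdef} guarantees that $Q$ lies in some cone $\sigma\in\fan$, and by the fan axioms each $\p_i\in Q$ is then a ray of $\sigma$.

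First I would split the relation into its positive and negative parts, writing $v = \sum_{e_i>0} e_i\,\p_i = \sum_{e_i<0}(-e_i)\,\p_i$, with index sets $I_+=\{i\mid e_i>0\}$ and $I_-=\{i\mid e_i<0\}$. Since $\sigma$ is strongly convex it admits no nontrivial relation among its rays with coefficients of a single sign, so both $I_+$ and $I_-$ are nonempty and $v\neq 0$. The crucial object is the unique minimal cone $\mu\in\fan$ whose relative interior contains $v$; it is the face of $\sigma$ in whose relative interior $v$ sits. Choosing $m\in\sigma^\vee$ that exposes this face (every face of a polyhedral cone is exposed), so that $\mu=\sigma\cap m^\perp$, the identity $\langle m,v\rangle = 0$ together with $\langle m,\p_i\rangle\ge 0$ and the strictly positive coefficients forces $\langle m,\p_i\rangle = 0$ for every $i\in I_+\cup I_-$. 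Thus \emph{every} ray occurring in the relation already lies in the small face $\mu$.

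To finish, I would pick any $i_0\in I_-$ and apply primitivity to the proper subset $P\setminus\{\p_{i_0}\}$, which therefore lies in some cone $\sigma_0\in\fan$. Since $i_0\notin I_+$, the entire positive side survives, so $v=\sum_{i\in I_+}e_i\,\p_i\in\sigma_0$; by uniqueness of the minimal cone of $\fan$ containing $v$, the face of $\sigma_0$ whose relative interior contains $v$ is again $\mu$, whence $\mu\subseteq\sigma_0$. But $\p_{i_0}\in\mu$ from the previous step, so $\p_{i_0}\in\sigma_0$ and hence $P\subseteq\sigma_0$, the desired contradiction. The step I expect to be the real obstacle — and the reason the statement has content in the non-simplicial setting — is the second one: in a non-simplicial fan a cone may well have linearly dependent ray generators (e.g.\ the cone over a square), so one cannot simply assert that rays lying in a common cone are independent. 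The supporting-functional argument that pins every ray of the relation into the single face $\mu$ is exactly what converts linear dependence into genuine containment, and the sign bookkeeping there — in particular noticing that the discarded ray $\p_{i_0}$ is dragged back into $\sigma_0$ through $\mu$ — is the heart of the proof.
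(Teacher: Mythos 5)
Your proof is correct, but it takes a genuinely different route from the paper's. The paper argues by induction on $|Q|$: assuming every $k$-element subset of $Q$ is independent but $Q$ itself is dependent, it slices the whole fan by $\mathrm{span}(Q)$ to get an auxiliary fan $\widetilde{\fan}$ in that $k$-dimensional subspace (the verification that this is a fan is waved off as straightforward), and then derives a dimension contradiction: the sliced minimal cone containing $Q$ and the sliced minimal cone containing $P\setminus\{\p\}$ are distinct cones of $\widetilde{\fan}$, so their intersection has dimension at most $k-1$, yet it contains the $k$ independent vectors $Q\setminus\{\p\}$. You avoid both the induction and the auxiliary fan entirely: you split a single dependence relation by sign, use an exposed-face functional $m\in\sigma^\vee$ to force every ray appearing in the relation into the minimal face $\mu$ whose relative interior contains $v$, and then use the uniqueness of the cone of $\fan$ whose relative interior contains $v$ to conclude $\mu\subseteq\sigma_0$ for the cone $\sigma_0$ containing $P\setminus\{\p_{i_0}\}$, which drags $\p_{i_0}$ back into $\sigma_0$ and contradicts primitivity. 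Both proofs share the key move of applying the primitive-collection property to $P$ minus one element of the dependence relation, but your mechanism for converting linear dependence into cone containment (localizing the relation to a face via a supporting functional) replaces the paper's slicing-plus-dimension-count, rests only on two standard facts (faces of polyhedral cones are exposed; relative interiors of cones of a fan partition $|\fan|$), and sidesteps the unproved claim that $\{\sigma\cap\mathrm{span}(Q)\}$ is a fan — a genuine simplification.
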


\begin{proof}
We use induction on $|Q|$. If $|Q|=1$ there is nothing to show.  Now
assume that $|Q| = k+1$, $k \ge 1$, and that every $k$-element subset
of $Q$ is linearly independent.

We show that $Q$ is linearly independent by contradiction.  Hence
suppose $Q$ is linearly dependent.  Then our induction hypothesis
implies that the subspace $\mathrm{span}(Q)$ has dimension $k$.
Define $\widetilde{\fan} = \{ \sigma \cap \mathrm{span}(Q) \mid
\sigma \in \fan\}$. We omit the
straightforward proof that $\widetilde{\fan}$ is a fan in
$\mathrm{span}(Q)$.

Now fix $\p \in Q$ and let $\sigma_\p$ be the minimal cone of $\fan$
containing $P\setminus \{\p\}$.  Notice that $\sigma_\p$ does not
contain $\p$ since $P$ is a primitive collection.  Also let $\sigma_Q$
be the minimal cone of $\fan$ containing $Q$.  The cones
$\widetilde{\sigma}_{Q} = \sigma_Q \cap \mathrm{span}(Q)$ and
$\widetilde{\sigma}_\p = \sigma_\p \cap \mathrm{span}(Q)$ are in the
fan $\widetilde{\fan}$ and $\widetilde{\sigma}_Q \ne
\widetilde{\sigma}_\p$ since $\p$ is contained in
$\widetilde{\sigma}_{Q}$ but not in
$\widetilde{\sigma}_\p$. Therefore, their intersection is at most
$(k-1)$-dimensional.  On the other hand, the intersection contains $k$
linearly independent vectors
\[
Q\setminus \{\p\} \subset \widetilde{\sigma}_{Q} \cap
\widetilde{\sigma}_\p,
\]
which is a contradiction.
\end{proof}

\begin{corollary}
\label{cor_strict}
Let $\fan$ be a fan in $N_\R \cong \R^n$ such that $|\fan|$ is convex
support of dimension $n$. Then every primitive collection for $\fan$
has at most $n+1$ elements.
\end{corollary}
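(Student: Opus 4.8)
The plan is to derive this immediately from Proposition~\ref{thm_strict}, which tells us that every \emph{proper} subset of a primitive collection is linearly independent. So the entire content of the corollary is a counting argument once that proposition is in hand.

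Concretely, I would let $P = \{\p_1,\dots,\p_k\}$ be a primitive collection for $\fan$ and consider the proper subset obtained by deleting a single ray, say $Q = P \setminus \{\p_k\} = \{\p_1,\dots,\p_{k-1}\}$. This $Q$ is a proper subset of $P$, so Proposition~\ref{thm_strict} applies and shows that $Q$ is linearly independent. Since the $\p_i$ are vectors in $N_\R \cong \R^n$, a linearly independent set can contain at most $n$ of them, giving $|Q| = k-1 \le n$, and hence $k \le n+1$. (If $k \le 1$ the bound is trivial, so this handles every case.)

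I do not anticipate any real obstacle here: the only ingredient beyond elementary linear algebra is Proposition~\ref{thm_strict}, and the passage from ``proper subsets are independent'' to ``at most $n$ elements in a proper subset'' to ``at most $n+1$ elements total'' is purely a matter of bookkeeping. The one point worth stating explicitly is that we apply the proposition to a proper subset of maximal size $k-1$, since the proposition says nothing about $P$ itself (indeed $P$ is typically linearly dependent, which is exactly what makes it a primitive collection). Thus the proof is short, and the real work has already been done in Proposition~\ref{thm_strict}.
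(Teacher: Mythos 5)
Your proof is correct and is essentially identical to the paper's: both apply Proposition~\ref{thm_strict} to a maximal proper subset $Q = P \setminus \{\p\}$, conclude $|Q| \le n$ from linear independence in $N_\R \cong \R^n$, and hence $|P| \le n+1$.
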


\begin{proof}
This follows immediately from proposition \ref{thm_strict} since any
maximal proper subset $Q = P\setminus\{\p\}$ is linearly independent
and hence has at most $n$ elements. Therefore $P = Q \cup \{ \p\}$ has
at most $n+1$ elements.
\end{proof}

\begin{remark}
Proposition~\ref{thm_strict} and Corollary~\ref{cor_strict} are
trivial in the simplicial case.
\end{remark}

\subsection{Type A Description of $\PL(\fan)$}
Let $\fan$ be a fan in $N_\R \cong \R^n$ with convex support of dimension
$n$, and let $\fan'$ be a simplicial refinement with $\fan(1) =
\fan'(1)$.  Given $\sigma \in \fan$, let $\Sigma_\sigma = \{\sigma'
\in \fan' \mid \sigma' \subset \sigma\}$.  The following convexity
result will be useful.  

\begin{lemma}
\label{lemma_subfab_convex}
Let $\sigma$ be a non-simplicial cone in $\fan$ and take an interior
wall $\tau'$ of $\fan_{\sigma}$ with $\tau=\sigma'_1 \cap \sigma'_2$,
$\sigma'_1, \sigma'_2 \in \fan_{\sigma}(n)$.  Then $\sigma'_1 \cup
\sigma'_2$ is convex.
\end{lemma}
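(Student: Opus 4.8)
The plan is to exploit the convex polytope $G_{\sigma,w}$ built in the refinement construction (Theorem~\ref{thrm_simpl_ref_general}), for which $\sigma_1'$ and $\sigma_2'$ are the cones $\pos(F_1)$ and $\pos(F_2)$ over two facets $F_1,F_2$ of $G_{\sigma,w}$ that do not contain the origin and meet along a common ridge $R$, with $\tau'=\pos(R)$. The first move is to convert ``$\sigma_1'\cup\sigma_2'$ is convex'' into a sign condition on the wall relation. Write $\tau'(1)=\{\p_1,\dots,\p_{n-1}\}$ and let $\p_n,\p_{n+1}$ be the remaining generators of $\sigma_1',\sigma_2'$, so that the wall relation reads $\sum_{i=1}^{n+1}a_i\p_i=0$ with $a_n,a_{n+1}>0$. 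Since $\sigma_1'\cup\sigma_2'$ is always contained in the convex cone $\pos(\p_1,\dots,\p_{n+1})$ and contains all its generators, the union is convex precisely when it equals $\pos(\p_1,\dots,\p_{n+1})$. A short elimination argument (subtracting a suitable positive multiple of the wall relation to kill either the $\p_n$ or the $\p_{n+1}$ coefficient) shows this happens exactly when $a_i\le 0$ for $i=1,\dots,n-1$, i.e.\ exactly when the point $z:=a_n\p_n+a_{n+1}\p_{n+1}$, which is forced to lie in $\Span(\tau')$, in fact lies in the cone $\tau'$ itself.

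The second step is to deduce this membership from the convexity of $G_{\sigma,w}$. Let $b_1,b_2$ be the vertices of $F_1,F_2$ opposite the ridge $R$, so $b_1$ lies on the ray $\p_n$ and $b_2$ on $\p_{n+1}$, and set $\Pi=\Span(\p_n,\p_{n+1})$. Intersecting $G_{\sigma,w}$ with the $2$-plane $\Pi$ yields a convex polygon having the origin as a vertex and $b_1,b_2$ among its vertices; the line $\Pi\cap\Span(\tau')$ passes through the origin and meets the outer boundary arc running from $b_1$ to $b_2$ in a single point $q$. Now $\p_n,\p_{n+1}\notin\Span(\tau')=\Span(R)$, and since $b_j\notin\Span(R)$ while the only vertices of $F_j$ lying in $\Span(R)$ are those of $R$, one gets $F_j\cap\Span(R)=R$. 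Hence, once $q$ is known to lie on $F_1\cup F_2$, it must lie in $R$, so the ray $\R_{\ge0}q=\R_{\ge0}z$ equals $\pos(R)=\tau'$ and the required membership $z\in\tau'$ follows, completing the reduction of the first step.

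The main obstacle is exactly the hypothesis flagged in the last sentence: controlling which facets of $G_{\sigma,w}$ the plane $\Pi$ actually meets along the outer arc between $b_1$ and $b_2$, that is, verifying that the subdivision is \emph{locally convex} across the interior wall $\tau'$ and that $\Pi$ does not escape $F_1\cup F_2$ into a neighbouring facet before reaching the ridge. Equivalently, one must show that the slice $G_{\sigma,w}\cap\Span(\tau')$ meets the outer boundary of $G_{\sigma,w}$ only along $R$, so that the exit point $q$ cannot land on some third facet. This is the delicate point, and it is here that the full convexity of $G_{\sigma,w}$ together with the pointedness of $\sigma$ (the origin is a vertex, and $\p_n,\p_{n+1}$ sit on opposite sides of $\Span(\tau')$) must be brought to bear. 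I expect this local-convexity claim, rather than the elementary bookkeeping of the first step, to be the heart of the argument, and the step most in need of a careful, self-contained justification.
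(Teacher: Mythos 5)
Your first step is sound, and it is the right way to set the problem up: $\sigma'_1\cup\sigma'_2$ is a union of cones that contains $\p_1,\dots,\p_{n+1}$ and is contained in $\pos(\p_1,\dots,\p_{n+1})$, so it is convex exactly when it equals that cone, which happens exactly when $z=a_n\p_n+a_{n+1}\p_{n+1}$ lies in $\tau'$, i.e.\ when the wall relation has non\-positive coefficients on $\tau'(1)$. The gap is the step you flag at the end and never prove---that the exit point $q$ lies on $F_1\cup F_2$ rather than on a third facet of $G_{\sigma,w}$---and this gap cannot be closed, because that claim, and in fact the lemma itself, is false. Take $n=4$ and $a=(0,0,0,1)$, $b=(1,0,0,1)$, $c=(0,1,0,1)$, $d=(1,1,1,1)$, $e=(1,1,-1,1)$, $g=(2,2,0,1)$; all six are vertices of the $3$-polytope $Q$ obtained by slicing $\sigma=\pos(a,b,c,d,e,g)$ at height $1$. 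Let $\fan$ consist of $\sigma$ and its faces, and let $\fan'$ have maximal cones $\sigma'_1=\pos(a,b,c,d)$, $\sigma'_2=\pos(a,b,c,e)$, $\pos(b,c,d,g)$, $\pos(b,c,e,g)$. One checks that all pairwise intersections are common faces and that the four tetrahedra fill $Q$ (their volumes $\tfrac16,\tfrac16,\tfrac12,\tfrac12$ sum to $\mathrm{vol}(Q)=\tfrac43$), so $\fan'$ is a simplicial refinement with $\fan'(1)=\fan(1)$ and $\fan_\sigma=\fan'$. Moreover $\fan'$ is of the exact type produced by Theorem~\ref{thrm_simpl_ref_general}, to which your argument implicitly restricts: it is the radial subdivision for the weights $w_a=w_b=w_c=1$ and $w_d,w_e,w_g$ generic near $\tfrac12$ (equivalently, heights $1/w_\p$ near $(1,1,1,2,2,2)$). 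Now $\tau'=\pos(a,b,c)$ is an interior wall separating $\sigma'_1$ and $\sigma'_2$, the wall relation is $2a-2b-2c+d+e=0$, so $z=d+e=2b+2c-2a\notin\tau'$, and indeed $\tfrac12(d+e)=(1,1,0,1)$ lies in neither $\sigma'_1$ nor $\sigma'_2$: the union is not convex. In this example the ray $\R_{\ge0}z$ exits $G_{\sigma,w}$ on the ridge between the facets lying over $\pos(b,c,d,g)$ and $\pos(b,c,e,g)$---precisely the ``escape into a neighbouring facet'' that your argument would have to rule out.

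For comparison, the paper's own proof is a different, much shorter argument: $\Span(\tau')$ cuts $\sigma$ into two convex halves containing $\sigma'_1$ and $\sigma'_2$ respectively, and these halves are asserted to meet exactly in $\tau'$, so that any segment from $\sigma'_1$ to $\sigma'_2$ crosses $\Span(\tau')$ inside $\tau'$ and one finishes by convexity of each $\sigma'_i$. The same example defeats that assertion: here $d+e=a+g$, so $\sigma\cap\Span(\tau')$ is the cone over the quadrilateral with vertices $a,b,g,c$, strictly larger than $\tau'$. So the difficulty you isolated is real, is not an artifact of your approach, and affects the paper's argument equally; no completion of either proof is possible in the stated generality (arbitrary simplicial refinements with $\fan'(1)=\fan(1)$, and even the special refinements of Theorem~\ref{thrm_simpl_ref_general}). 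What is true, and makes both your reduction and the paper's argument work instantly, is the lemma under the additional hypothesis $\sigma\cap\Span(\tau')=\tau'$; this hypothesis is automatic when $n\le 3$, which is why low-dimensional pictures are misleading here. Any use of the lemma---Corollary~\ref{corollary_subfab_convex} and the Type A description of $\PL(\fan)$, whose conclusions do survive in the example above (for instance $\{d,e\}$ is still a primitive collection for $\fan'$) but whose written proofs rest on this convexity---has to be reexamined accordingly.
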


\begin{proof}
Since $\tau'(1) \subset \sigma(1)$, $\tau'$ divides $\sigma$ into two
convex subcones $\sigma^+,\sigma^-$ with $\tau = \sigma^+ \cap
\sigma^-$.  We may assume $\sigma'_1 \subset \sigma^+,\ \sigma'_2
\subset \sigma^-$.  Given $u \in \sigma'_1, v \in \sigma'_2$, it
follows easily that the line segment $\overline{uv}$ lies in
$\sigma'_1 \cup \sigma'_2$.
\end{proof}

\begin{corollary}
\label{corollary_subfab_convex}
In the situation of Lemma~\ref{lemma_subfab_convex}, let $P$ be the
two element set 
\[
P = \sigma'_1(1) \cup \sigma'_2(1) \setminus \tau'(1).
\]
Thus $P$ consists of the generators of $\sigma'_1, \sigma'_2$ not
lying in the wall $\tau'=\sigma'_1 \cap \sigma'_2$.  Then $P$ is a
primitive collection for $\fan'$.
\end{corollary}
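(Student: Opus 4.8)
The plan is to observe first that, because $\sigma'_1$ and $\sigma'_2$ are simplicial $n$-dimensional cones of $\fan_\sigma$ sharing the $(n-1)$-dimensional wall $\tau'$, the set $P = \sigma'_1(1)\cup\sigma'_2(1)\setminus\tau'(1)$ consists of exactly two rays, say $\p_n$ and $\p_{n+1}$, with $\sigma'_1(1)=\tau'(1)\cup\{\p_n\}$ and $\sigma'_2(1)=\tau'(1)\cup\{\p_{n+1}\}$. Since the only proper subsets of $P$ are $\emptyset$, $\{\p_n\}$, and $\{\p_{n+1}\}$, each of which is trivially contained in a cone of $\fan'$ (every ray of $\fan'(1)$ is itself a cone of $\fan'$), the entire content of the corollary reduces to showing that $P$ is \emph{not} contained in any single cone of $\fan'$.

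To prepare for this, I would record the wall relation \eqref{wallrelation} for the interior wall $\tau'$, written $\sum_{i=1}^{n-1}a_i\p_i + a_n\p_n + a_{n+1}\p_{n+1}=0$ with $a_n,a_{n+1}>0$ and $\tau'(1)=\{\p_1,\dots,\p_{n-1}\}$. Setting $H=\Span(\tau')$, the relation shows that $w=\frac{a_n\p_n+a_{n+1}\p_{n+1}}{a_n+a_{n+1}}$ lies in $H$, and $w$ is an interior point of the segment $\overline{\p_n\p_{n+1}}$ because both coefficients are positive. As $\p_n\in\sigma'_1$, $\p_{n+1}\in\sigma'_2$, and $\sigma'_1\cup\sigma'_2$ is convex by Lemma~\ref{lemma_subfab_convex}, this segment lies in $\sigma'_1\cup\sigma'_2$, so $w\in(\sigma'_1\cup\sigma'_2)\cap H$. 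I would then note that $(\sigma'_1\cup\sigma'_2)\cap H=\tau'$: a point of $\sigma'_1=\pos(\tau'(1)\cup\{\p_n\})$ lies in $H$ only if its $\p_n$-coefficient vanishes, since $\p_n\notin H$ (otherwise $\sigma'_1$ would fail to be $n$-dimensional), and symmetrically for $\sigma'_2$. Hence $w\in\tau'$.

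The proof then finishes by contradiction. Suppose $P=\{\p_n,\p_{n+1}\}\subseteq\gamma$ for some $\gamma\in\fan'$. Being rays of $\fan'$ contained in $\gamma$, both $\p_n$ and $\p_{n+1}$ are generators of $\gamma$, and as $\gamma$ is simplicial the cone $\pos(\p_n,\p_{n+1})$ is a face of $\gamma$. Because $w$ is a strictly positive combination of $\p_n$ and $\p_{n+1}$ and of no other generator of $\gamma$, the cone $\pos(\p_n,\p_{n+1})$ is the minimal face of $\gamma$ containing $w$. On the other hand, $\gamma\cap\tau'$ is a common face of the two cones $\gamma,\tau'\in\fan'$ and it contains $w$, so it must contain this minimal face; thus $\pos(\p_n,\p_{n+1})\subseteq\tau'\subseteq H$, forcing $\p_n\in H$ and contradicting $\p_n\notin H$. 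Therefore $P$ lies in no cone of $\fan'$, and $P$ is a primitive collection for $\fan'$.

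I expect the main obstacle to be the passage from $w\in H$ to $w\in\tau'$, which is exactly where Lemma~\ref{lemma_subfab_convex} is indispensable: the wall relation by itself only places the crossing point $w$ in the hyperplane $H$, and without the convexity of $\sigma'_1\cup\sigma'_2$ there is no reason for $w$ to land in the wall $\tau'$ rather than elsewhere in $H\cap\sigma$. Once $w\in\tau'$ is secured, the contradiction is a routine application of the fact that the intersection of two cones of a fan is a common face, together with the description of minimal faces in a simplicial cone.
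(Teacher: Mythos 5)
Your proof is correct and follows essentially the same route as the paper: everything reduces to showing the two rays of $P$ lie in no single cone of $\fan'$, and this is ruled out by combining the convexity of $\sigma'_1 \cup \sigma'_2$ from Lemma~\ref{lemma_subfab_convex} with the common-face axiom for fans. The paper's own proof is a compressed version of this argument (it notes $P \not\subset \sigma'_1, \sigma'_2$, invokes convexity, and leaves the face-theoretic deduction implicit), whereas you make it explicit by locating the crossing point $w$ in $\tau'$ via the wall relation and running a minimal-face argument in the simplicial cone $\gamma$; both implementations are sound.
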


\begin{proof}
First note that $P$ is contained in neither $\sigma'_1$ nor
$\sigma'_2$.  Since $P$ is contained in the convex set $\sigma'_1 \cup
\sigma'_2$, it follows that $P$ is contained in no cone of $\fan'$.
Thus $P$ is a primitive collection for $\fan'$ since has it only has
two elements.
\end{proof}

As in the proof of Theorem~\ref{theorem_mori_nonsimp}, a primitive
collection for $\fan'$ has Type A when it is contained in a cone of
$\fan$.  Hence the primitive collection for $\fan'$ constructed in
Corollary~\ref{corollary_subfab_convex} has Type A.  The idea is that
these two-element primitive collections define $\PL(\fan)$ inside
$\PL(\fan')$.

\begin{proposition}
Let $\fan$ be a fan in $N_\R \cong \R^n$ with convex support of
dimension $n$ and let $\fan'$ be a simplicial refinement with
$\fan(1)=\fan'(1)$.  Then
\begin{align*}
\PL(\fan) = \big\{& \phi \in \PL(\fan') \mid \phi(\p_1 + \p_2) =
\phi(\p_1) + \phi(\p_2)\ \text{for all} \\ &\text{\rm Type A
primitive collections} \ \{ \p_1,\p_2\}\ \text{\rm for} \ \fan'
\big\}.
\end{align*}

\end{proposition}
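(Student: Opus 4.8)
The plan is to prove the two inclusions separately. The inclusion $\PL(\fan) \subseteq \{\dots\}$ is immediate: since $\fan'$ refines $\fan$, every $\phi \in \PL(\fan)$ restricts to a linear function on each cone of $\fan'$, so $\phi \in \PL(\fan')$; and if $\{\p_1,\p_2\}$ is a Type~A primitive collection for $\fan'$, then $\p_1,\p_2$, and hence $\p_1+\p_2$, lie in a single cone $\sigma \in \fan$ on which $\phi$ is linear, forcing $\phi(\p_1+\p_2) = \phi(\p_1)+\phi(\p_2)$. This is exactly the observation already used in the proof of Theorem~\ref{theorem_mori_nonsimp}.

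For the reverse inclusion, fix $\phi \in \PL(\fan')$ satisfying all the Type~A equalities; I must show $\phi$ is linear on each $\sigma \in \fan(n)$. If $\sigma$ is simplicial then $\sigma \in \fan'$, since a full-dimensional simplicial cone has exactly $n$ rays and admits no proper subdivision using only those rays; thus $\phi\res{\sigma}$ is automatically linear. Assume then that $\sigma$ is non-simplicial, and write $m_{\sigma'} \in M_\R$ for the linear piece of $\phi$ on each $\sigma' \in \fan_\sigma(n)$. It suffices to prove $m_{\sigma'_1} = m_{\sigma'_2}$ whenever $\sigma'_1, \sigma'_2 \in \fan_\sigma(n)$ share an interior wall $\tau'$ of $\fan_\sigma$: the dual graph of $\fan_\sigma$ (maximal cones joined across interior walls) is connected, since a generic path between relative-interior points of two maximal cones stays inside $\sigma$ and crosses only interior walls, so equality across every interior wall propagates to a single covector $m_\sigma$ valid on all of $\sigma$, i.e. $\phi\res{\sigma}$ is linear.

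The heart of the argument is the wall step, and it uses Corollary~\ref{corollary_subfab_convex} together with Lemma~\ref{lemma_subfab_convex}. By the corollary, $P = \sigma'_1(1) \cup \sigma'_2(1) \setminus \tau'(1) = \{\p_1,\p_2\}$ is a primitive collection for $\fan'$, and it is of Type~A because $\p_1, \p_2 \in \sigma(1)$, where $\p_1 \in \sigma'_1 \setminus \tau'$ and $\p_2 \in \sigma'_2 \setminus \tau'$. By Lemma~\ref{lemma_subfab_convex}, $\sigma'_1 \cup \sigma'_2$ is convex, so $\p_1 + \p_2$ lies in $\sigma'_1 \cup \sigma'_2$; say $\p_1 + \p_2 \in \sigma'_1$ (the other case is symmetric). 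Then $\phi(\p_1+\p_2) = \langle m_{\sigma'_1}, \p_1 + \p_2\rangle$, while $\phi(\p_1) = \langle m_{\sigma'_1}, \p_1\rangle$ and $\phi(\p_2) = \langle m_{\sigma'_2}, \p_2\rangle$, so the Type~A equality $\phi(\p_1+\p_2) = \phi(\p_1) + \phi(\p_2)$ collapses to the single scalar relation $\langle m_{\sigma'_1} - m_{\sigma'_2}, \p_2 \rangle = 0$. Since $\phi \in \PL(\fan')$ we also have $m_{\sigma'_1} - m_{\sigma'_2} \in (\tau')^\perp$, and because $\p_2 \notin \Span(\tau')$ we get $\Span(\tau') + \R\p_2 = N_\R$; hence $m_{\sigma'_1} - m_{\sigma'_2}$ annihilates all of $N_\R$, giving $m_{\sigma'_1} = m_{\sigma'_2}$. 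I expect this collapse---locating $\p_1 + \p_2$ via convexity and then combining the resulting scalar equation with the perpendicularity condition coming from $\phi \in \PL(\fan')$---to be the only real obstacle; the forward inclusion and the connectivity bookkeeping are routine.
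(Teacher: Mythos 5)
Your proposal is correct and follows essentially the same route as the paper: the forward inclusion by linearity on cones of $\fan$, and the reverse inclusion by reducing to adjacent maximal cones $\sigma'_1,\sigma'_2$ of $\fan_\sigma$, invoking Lemma~\ref{lemma_subfab_convex} and Corollary~\ref{corollary_subfab_convex} to locate $\p_1+\p_2$ in one of the two cones, and combining the resulting scalar equation with $m_{\sigma'_1}-m_{\sigma'_2}\in(\tau')^\perp$ to conclude $m_{\sigma'_1}=m_{\sigma'_2}$. The only difference is that you spell out the dual-graph connectivity and the simplicial-cone case, which the paper leaves implicit in its ``it suffices to show'' reduction.
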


\begin{proof}
The inclusion $\subset$ is obvious since elements of $\PL(\fan)$ are
linear on cones of $\fan$ and a Type A primitive collection is
contained in such a cone.

For the opposite inclusion, take $\phi \in \PL(\fan')$ such that
$\phi(\p_1 + \p_2) = \phi(\p_1) + \phi(\p_2)$ for all two element Type
A primitive collections for $\fan'$.  For each $\sigma' \in \fan'(n)$,
there is $m_{\sigma'} \in M_\R$ such that $\phi(u) = \langle
m_{\sigma'},u\rangle$ for $u \in \sigma'$.  It suffices to show that
$m_{\sigma_1'} = m_{\sigma_2'}$ for cones $\sigma_1', \sigma_2'$ that
lie in the same cone $\sigma$ of $\fan$ and intersect in a wall
$\sigma_1' \cap \sigma_2'=\tau'$.  This is the situation of
Corollary~\ref{corollary_subfab_convex}, where
\[
\sigma'_1(1) \cup \sigma'_2(1) = \tau'(1) \cup
\{\p_1,\p_2\}
\]
and $P = \{\p_1,\p_2\}$ is a two element Type A primitive collection
for $\fan'$.  We label the elements of $P$ so that $\p_1 \in
\sigma_1'$ and $\p_2 \in \sigma_2'$.

Since $\sigma'_1 \cup \sigma'_2$ is convex by
Lemma~\ref{lemma_subfab_convex}, it contains $\p_1+\p_2$.  We may
assume $\p_1+\p_2 \in \sigma'_2$ without loss of generality.  Then
\[
\langle m_{\sigma_1'},\p_1\rangle = \phi(\p_1) = - \phi(\p_2)+
\phi(\p_1 + \p_2) = -\langle m_{\sigma_2'},\p_1\rangle
+ \langle m_{\sigma_2'},\p_1+p_2\rangle = \langle
m_{\sigma_2'},\p_1\rangle.
\]
Since $m_{\sigma_1'} - m_{\sigma_2'} \in \tau^{\prime\perp}$, it
follows that $m_{\sigma_1'} = m_{\sigma_2'}$.  This completes the proof.
\end{proof}

\subsection{Primitive Collections Supported on Simplicial Refinements}
In the fan $\fan$ pictured in Figure~\ref{figure:counterex_def} in
Example~\ref{nonsimpl_ex}, we saw that every primitive collection for
$\fan$ came from a primitive collection for a simplicial subdivision
of $\fan$.

In general, if $\fan'$ is a simplicial subdivision of $\fan$ with
$\fan'(1) = \fan(1)$, we say that a primitive collection $P$ for
$\fan$ is \emph{supported} on $\fan'$ if $P$ is also a primitive
collection for $\fan'$.  We now prove that all primitive collections
for $\fan$ are supported on such simplicial subdivisions.  Here is the
precise result.

\begin{proposition} 
\label{prim_lemma} 
Let $\fan$ be a fan in $N_\R \cong \R^n$ with convex support of
dimension $n$ and let $P$ be a primitive collection for $\fan$.  Then
there exists a simplicial refinement $\fan'$ with $\fan'(1) = \fan(1)$
such that $P$ is a primitive collection for $\fan'$.  Furthermore, if
$\fan$ is quasi-projective, then $\fan'$ can be chosen to be
quasi-projective.
\end{proposition}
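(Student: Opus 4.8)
The plan is to apply Theorem~\ref{thrm_simpl_ref_general} directly to the primitive collection $P$ itself. The first step is to verify its hypothesis, namely that $P \cap \sigma(1)$ is linearly independent for every $\sigma \in \fan$. Since $P$ is a primitive collection, it is not contained in any cone of $\fan$, so $P \not\subset \sigma(1)$, and hence $P \cap \sigma(1)$ is a \emph{proper} subset of $P$ for every $\sigma$. Proposition~\ref{thm_strict} then guarantees that this proper subset is linearly independent. This is the one place where the strictness property of primitive collections enters, and it is exactly the input that the refinement theorem requires.

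Once the hypothesis is checked, Theorem~\ref{thrm_simpl_ref_general} produces a simplicial refinement $\fan'$ of $\fan$ with $\fan'(1) = \fan(1)$ such that $P \cap \sigma(1)$ generates a cone of $\fan'$ for every $\sigma \in \fan$; moreover, $\fan'$ may be taken quasi-projective when $\fan$ is. It then remains to check that $P$ is a primitive collection for this $\fan'$, which breaks into the two halves of Definition~\ref{pcdef}. For the ``not contained in a single cone'' half: since $\fan'$ refines $\fan$, every cone of $\fan'$ lies inside a cone of $\fan$, so if $P$ were contained in a cone of $\fan'$ it would be contained in a cone of $\fan$, contradicting that $P$ is a primitive collection for $\fan$.

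For the ``every proper subset lies in a cone'' half: given a proper subset $Q \subsetneq P$, the fact that $P$ is primitive for $\fan$ gives a cone $\sigma \in \fan$ with $Q \subset \sigma$, whence $Q \subset \sigma(1)$ and therefore $Q \subset P \cap \sigma(1)$. By construction $\pos(P \cap \sigma(1))$ is a cone of $\fan'$, and because $P \cap \sigma(1)$ is linearly independent this cone is simplicial with exactly those rays as its one-dimensional faces; thus $\pos(Q)$ is one of its faces and hence itself a cone of $\fan'$. So $Q$ is contained in a cone of $\fan'$, as required. This establishes both halves, and the quasi-projectivity clause is inherited directly from Theorem~\ref{thrm_simpl_ref_general}.

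I do not expect a serious obstacle here, since the argument is largely a matter of assembling earlier results. The two subtle points to get right are, first, the linear-independence check: one must notice that $P \cap \sigma(1)$ is always a \emph{proper} subset of $P$, so that Proposition~\ref{thm_strict} genuinely applies; and second, in the final step, that the linear independence of $P \cap \sigma(1)$ is what makes $\pos(P \cap \sigma(1))$ simplicial, so that $\pos(Q)$ is truly a face (and hence a cone of $\fan'$) rather than merely a subset of the cone.
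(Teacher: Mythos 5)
Your proposal is correct and follows essentially the same path as the paper's own proof: use Proposition~\ref{thm_strict} plus the fact that $P\cap\sigma(1)$ is a proper subset of $P$ to verify the hypothesis of Theorem~\ref{thrm_simpl_ref_general}, invoke that theorem (including its quasi-projective clause), and then check the two halves of Definition~\ref{pcdef} exactly as the paper does. Your final refinement---that $\pos(Q)$ is itself a face of the simplicial cone $\pos(P\cap\sigma(1))$---is harmless but not needed, since Definition~\ref{pcdef} only asks that each proper subset be \emph{contained} in a cone of $\fan'$, which already follows from $Q \subset P\cap\sigma(1)$.
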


\begin{proof}
By Proposition~\ref{thm_strict}, every proper subset $P$ is linearly
independent.  In particular, if $\sigma \in \fan$, then
$P\cap\sigma(1)$ is a proper subset of $P$ (since $P$ is a primitive
collection) and hence is linearly independent.  Thus we can apply
Theorem~\ref{thrm_simpl_ref_general} to obtain a simplicial refinement
$\fan'$ such that $P\cap\sigma(1)$ generates a cone of $\fan'$ for all
$\sigma \in \fan$.  The theorem also allows us to assume $\fan'$ is
quasi-projective whenever $\fan$ is.

We claim that $P$ is a primitive collection for $\fan'$.  First note
that if $P$ were contained in a cone of $\fan'$, then it would be
contained in a cone of $\fan$, which we know to be false.  Now let $Q$
be a proper subset of $P$.  Then $Q$ is contained in a cone $\sigma
\in \fan$, so that $Q \subset P\cap\sigma(1)$.  Since $P\cap\sigma(1)$
generates a cone of $\fan'$, it follows that $Q$ is contained in a
cone of $\fan'$.  Hence $P$ is a primitive collection for $\fan'$.
\end{proof}

\begin{remark}
When $\fan$ is non-simiplicial, it may be impossible to find a
\emph{single} simplicial refinement $\fan'$ such that \emph{every}
primitive collection for $\fan$ is also primitive for $\fan'$.  In
Figure \ref{figure:counterex_def} from Example~\ref{nonsimpl_ex}, we
see two primitive collections $P_1 = \{\p_0, \p_1, \p_3\}$ and $P_2 =
\{ \p_0, \p_2, \p_4\}$, but there is no simplicial refinement $\fan'$
of $\fan$ with $\fan'(1) = \fan(1)$ that supports both $P_1$ and
$P_2$.
\end{remark}

\section{Is Quasi-Projective Necessary?}

In this section we explore an open question about primitive
collections.  In \cite{casagrande}, Casagrande raises the question of
whether $\CPL(\fan)$ is defined by primitive inequalities when $\fan$
is not quasi-projective.  Here is a classic example.

\begin{example}
\label{ex_simplicial_nonqp}
The following example of a non-projective smooth complete fan is taken
from Fulton \cite[p.\ 71]{fulton}.  Consider the fan
$\fan$ in $\R^3$ with seven minimal generators:
\begin{align*}
&\p_1=(-1,0,0), \ \p_2=(0,-1,0), \ \p_3=(0,0,-1), \ \p_4=(1,1,1), \\
&\p_5=(1,1,0), \ p_6=(0,1,1), \ \p_7=(1,0,1).
\end{align*}
The cones of $\fan$ are obtained by projecting from the origin through
the triangulated polytope shown in
Figure~\ref{figure:ex_simplicial_nonqp}.  The fan $\fan$ has 15 walls
and 10 maximal cones.
\begin{figure}[htb]
\begin{center}
\begin{picture}(0,0)%
\includegraphics{ex_simplicial_nonqp.pstex}%
\end{picture}%
\setlength{\unitlength}{4144sp}%
\begingroup\makeatletter\ifx\SetFigFont\undefined%
\gdef\SetFigFont#1#2#3#4#5{%
  \reset@font\fontsize{#1}{#2pt}%
  \fontfamily{#3}\fontseries{#4}\fontshape{#5}%
  \selectfont}%
\fi\endgroup%
\begin{picture}(2519,2700)(1149,-2543)
\put(3668,-1463){\makebox(0,0)[lb]{\smash{{\SetFigFont{10}{14.4}{\rmdefault}{\mddefault}{\updefault}{\color[rgb]{0,0,0}$\rho_1$}%
}}}}
\put(2888,-2498){\makebox(0,0)[lb]{\smash{{\SetFigFont{10}{14.4}{\rmdefault}{\mddefault}{\updefault}{\color[rgb]{0,0,0}$\rho_3$}%
}}}}
\put(3203,-892){\makebox(0,0)[lb]{\smash{{\SetFigFont{10}{14.4}{\rmdefault}{\mddefault}{\updefault}{\color[rgb]{0,0,0}$\rho_6$}%
}}}}
\put(1149,-1703){\makebox(0,0)[lb]{\smash{{\SetFigFont{10}{14.4}{\rmdefault}{\mddefault}{\updefault}{\color[rgb]{0,0,0}$\rho_2$}%
}}}}
\put(2408, 37){\makebox(0,0)[lb]{\smash{{\SetFigFont{10}{14.4}{\rmdefault}{\mddefault}{\updefault}{\color[rgb]{0,0,0}$\rho_4$}%
}}}}
\put(1651,-938){\makebox(0,0)[lb]{\smash{{\SetFigFont{10}{14.4}{\rmdefault}{\mddefault}{\updefault}{\color[rgb]{0,0,0}$\rho_7$}%
}}}}
\put(2520,-1432){\makebox(0,0)[lb]{\smash{{\SetFigFont{10}{14.4}{\rmdefault}{\mddefault}{\updefault}{\color[rgb]{0,0,0}$\rho_5$}%
}}}}
\end{picture}%
\end{center}
\caption{Non Quasi-Projective Example}
\label{figure:ex_simplicial_nonqp}
\end{figure}

The seven primitive collections for $\fan$ and their associated primitive
relations are:
\begin{align*}
\{\p_2,\p_4\}\colon & \p_2+\p_4 = \p_7\\
\{\p_1,\p_4\}\colon &\p_1 + \p_4 =\p_6\\
\{\p_2,\p_5\}\colon &\p_2+\p_5= \p_3+\p_7\\
\{\p_3,\p_6\}\colon &\p_3+\p_6=\p_1+\p_5\\
\{\p_3,\p_4\}\colon &\p_3+\p_4=\p_5\\
\{\p_1,\p_7\}\colon &\p_1+\p_7=\p_2+\p_6\\
\{\p_5,\p_6,\p_7\}\colon &\p_5+\p_6+\p_7=2\p_4.
\end{align*}
By \eqref{cone_inclusion}, a convex function $\phi \in \CPL(\fan)$
satisfies the primitive inequalities:
\begin{equation}
\label{fulton_ineq}
\begin{aligned}
&\phi(\p_2)+\phi(\p_4) \geq \phi(\p_7)\\
&\phi(\p_1) + \phi(\p_4) \geq\phi(\p_6)\\
&\phi(\p_5)+\phi(\p_2)\geq \phi(\p_3)+\phi(\p_7)\\
&\phi(\p_3)+\phi(\p_6)\geq\phi(\p_1)+\phi(\p_5)\\
&\phi(\p_3)+\phi(\p_4)\geq\phi(\p_5)\\
&\phi(\p_1)+\phi(\p_7)\geq\phi(\p_2)+\phi(\p_6)\\
&\phi(\p_5)+\phi(\p_6)+\phi(\p_7)\geq2\phi(\p_4).
\end{aligned}
\end{equation}
Notice that adding up the third, fourth and sixth inequalities yields an
equality, hence we have 3 equalities:
\begin{align*}
&\phi(\p_2)+\phi(\p_5)=  \phi(\p_3)+\phi(\p_7)\\
&\phi(\p_3)+\phi(\p_6)=\phi(\p_1)+\phi(\p_5)\\
&\phi(\p_1)+\phi(\p_7)=\phi(\p_2)+\phi(\p_6).
\end{align*}

To see what this says about the nef cone $\nef(X)$, note that
\[
\nef(X) \cong \{\phi \in \CPL(\fan) \mid \phi(\p_1) = \phi(\p_2) =
\phi(\p_3) = 0\}.
\]
Assume $\phi(\p_1) = \phi(\p_2) = \phi(\p_3) = 0$. Then the three
equalities give $\phi(\p_5) = \phi(\p_6) = \phi(\p_7)$.  Define $a=
\phi(\p_4)$ and $b= \phi(\p_5) = \phi(\p_6) = \phi(\p_7)$. Then
inequalities \eqref{fulton_ineq} imply $a \ge b$ and $3b \ge 2a$. It
follows that $\nef(X)$ is contained in the 2-dimensional cone pictured
in Figure~\ref{figure:ex_simpl_nqp_nef}.
\begin{figure}[htb]
\begin{center}
\begin{picture}(0,0)%
\includegraphics{ex_simpl_nqp_nef.pstex}%
\end{picture}%
\setlength{\unitlength}{4144sp}%
\begingroup\makeatletter\ifx\SetFigFont\undefined%
\gdef\SetFigFont#1#2#3#4#5{%
  \reset@font\fontsize{#1}{#2pt}%
  \fontfamily{#3}\fontseries{#4}\fontshape{#5}%
  \selectfont}%
\fi\endgroup%
\begin{picture}(2163,2010)(1789,-1873)
\put(3691,-1658){\makebox(0,0)[lb]{\smash{{\SetFigFont{10}{14.4}{\familydefault}{\mddefault}{\updefault}{\color[rgb]{0,0,0}$a$}%
}}}}
\put(1921, 29){\makebox(0,0)[lb]{\smash{{\SetFigFont{10}{14.4}{\familydefault}{\mddefault}{\updefault}{\color[rgb]{0,0,0}$b$}%
}}}}
\put(3541,-458){\makebox(0,0)[lb]{\smash{{\SetFigFont{12}{14.4}{\familydefault}{\mddefault}{\updefault}{\color[rgb]{0,0,0}}%$\mathrm{Nef}(X)$
}}}}
\end{picture}%
\end{center}
\caption{Cone Defined by Primitive Inequalities}
\label{figure:ex_simpl_nqp_nef}
\end{figure}

Since $\pic(X)_\R$ has dimension 4 and $\nef(X)$ has dimension at most
two, we see that $X$ is non-projective since the nef cone does not
have maximal dimension.

It is also easy to see that the cone in
Figure~\ref{figure:ex_simpl_nqp_nef} actually equals the nef cone
$\nef(X)$---just show that the generators of this cone are nef.  For
example, when $a = b > 0$, note that $\fan$ is a refinement of the
complete fan $\fan_0$ with 1-dimensional generators
$\p_1,\p_2,\p_3,\p_4$.  The toric variety of $\fan_0$ is $\P^3$, and
the class corresponding to $a=b> 0$ is the pullback of an ample
divisor on $\P^3$, hence nef on $X$.  For $3b = 2a > 0$, one proceeds
similarly by noting that $\fan$ is a refinement of the projective
non-simplical fan $\fan_1$ with 1-dimensional generators
$\p_1,\p_2,\p_3,\p_5,\p_6,\p_7$.  \hfill\qed
\end{example}

Other more substantial examples can be found in Chapter 7 of
Scaramuzza's thesis \cite{scaramuzza}.  Based on this, we make the
following conjecture, which we credit to Casagrande.

\begin{conjecture}[Casagrande]
\label{casagrande}
Let $X$ be a simplicial toric variety coming from the fan
$\fan$ in $N_\R \cong \R^n$.  If $|\fan|$ is convex of
dimension $n$, then
\begin{align*}
\CPL(\Sigma) = \big\{ &\phi \in \PL(\fan) \mid \phi(\p_1) +
\cdots + \phi(\p_k) \ge \phi(\p_1 + \cdots + \p_k )\\ 
&\text{\rm for all primitive collections}\ \{ \p_1,\dots, \p_k \}
\ \text{\rm for}\ \fan \big\}. 
\end{align*}
\end{conjecture}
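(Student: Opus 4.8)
The plan is to reduce the conjecture to a purely combinatorial statement about the fan and then attempt that statement directly, without the crutch of a strictly convex support function. By Proposition~\ref{equivalent}, which assumes only that $|\fan|$ is convex of dimension $n$, the conjecture is equivalent to the assertion that $\NE(X) = \sum_P \R_{\geq 0}\, l_P$, the sum being over all primitive collections $P$ for $\fan$. Now Theorem~\ref{thm_walls_mori} gives $\NE(X) = \sum_\tau \R_{\geq 0}\, l_\tau$ over all interior walls $\tau$, and this is also proved with no quasi-projective hypothesis. Since $l_P \in \NE(X)$ always holds by Proposition~\ref{primrel_unique}, the inclusion $\sum_P \R_{\geq 0}\, l_P \subseteq \NE(X)$ is automatic. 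Thus the entire content of the conjecture is the reverse inclusion, which amounts to showing that every wall relation $l_\tau$ is a nonnegative combination of primitive relations $l_P$. Equivalently, after dualizing, every $\phi \in \PL(\fan)$ satisfying all primitive inequalities is convex across every interior wall, hence convex.

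In the quasi-projective case this combinatorial statement is exactly Theorem~\ref{theorem_mori_primitive}, and it is proved by reducing to \emph{extremal} walls: strong convexity of $\NE(X)$ guarantees that the extremal wall relations generate, and Proposition~\ref{reid_primcoll} shows that the positive set $P_\tau = \{\p_i \mid a_i > 0\}$ of an extremal wall relation is a primitive collection whose relation equals $l_\tau$ up to a positive scalar. Both ingredients use quasi-projectivity essentially. Strong convexity of the Mori cone can fail when $X$ is not quasi-projective (as in Example~\ref{ex_simplicial_nonqp}, where $\NE(X)$ contains a two-dimensional subspace), so extremal rays no longer generate in the naive sense. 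More seriously, the proof of Proposition~\ref{reid_primcoll} runs its argument against a strictly convex $\phi$, precisely in order to force $P_\tau$ not to lie in a single cone; no such $\phi$ is available here.

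The strategy I would pursue is therefore to establish directly, for an arbitrary interior wall $\tau$ of a simplicial fan with convex full-dimensional support, that $l_\tau \in \sum_P \R_{\geq 0}\, l_P$, treating one wall at a time. For a wall whose positive set $P_\tau$ happens to be a primitive collection, the uniqueness of the relation among $\p_1,\dots,\p_{n+1}$ in the simplicial case (as in the second proof of Proposition~\ref{reid_primcoll}) already exhibits $l_\tau$ as a positive multiple of $l_{P_\tau}$. The difficulty lies in the remaining walls, where $P_\tau$ is \emph{not} a primitive collection. Here one would try to decompose $l_\tau$ by induction, either on $n$ by passing to the fan induced in $\Span(P_\tau)$ as in the proof of Proposition~\ref{thm_strict}, or on the number of maximal cones by a local subdivision near $\tau$, peeling off primitive relations $l_Q$ for primitive subcollections $Q \subset P_\tau$ and controlling the remainder as in the computation with $\beta = a_\tau - \lambda a_Q$ in Proposition~\ref{reid_primcoll}, but now without invoking strict convexity to certify that the remainder again lies in $\NE(X)$.

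The hard part, and presumably the reason the conjecture is still open, is exactly this certification. In the quasi-projective proof the membership $\beta \in \NE(X)$ together with the extremality of $l_\tau$ conspire to pin $a_\tau$ to the ray of a single primitive relation; without a strictly convex support function there is no analogue of the estimate $l_\tau(\phi) > 0$ that drives that step, and Reid's structural results \eqref{reid1} and \eqref{reid2} organizing the cones $\Delta_i$ are themselves available only for extremal walls. One therefore needs a new, intrinsically combinatorial certificate, depending only on convexity of $|\fan|$, that an arbitrary wall relation is a nonnegative combination of primitive relations. The supporting evidence (Example~\ref{ex_simplicial_nonqp} and Scaramuzza's computations) strongly suggests that such a certificate exists, but I do not see how to produce it in general, which is why the statement remains a conjecture.
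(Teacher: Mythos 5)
The statement you were given is Conjecture~\ref{casagrande}, which the paper itself leaves \emph{open}: it is stated with supporting evidence only (Example~\ref{ex_simplicial_nonqp}, Scaramuzza's computations, and Casagrande's theorem on smooth complete varieties admitting a projective toric blow-up), and the authors close by saying that significantly new ideas will likely be needed. So there is no proof in the paper to compare against, and you were right not to manufacture one. Your reductions are correct and match the paper's own framing: Proposition~\ref{equivalent}, Theorem~\ref{thm_walls_mori}, and Proposition~\ref{primrel_unique} are indeed stated under only the hypothesis that $|\fan|$ is convex of dimension $n$, so the conjecture is precisely the claim that every interior wall relation $l_\tau$ lies in $\sum_P \R_{\ge 0}\, l_P$. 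Your diagnosis of the obstruction is also the one the paper gives: the proof of Theorem~\ref{theorem_mori_primitive} requires $\NE(X)$ to be strongly convex so that extremal wall relations generate, and Proposition~\ref{reid_primcoll} runs its argument against a strictly convex support function; both fail without quasi-projectivity, and in Example~\ref{ex_simplicial_nonqp} the Mori cone does contain a two-dimensional linear subspace, as you say.

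One caution about the partial case you assert in passing: for a \emph{non-extremal} wall $\tau$ whose positive set $P_\tau = \{\p_i \mid a_i > 0\}$ happens to be a primitive collection, proportionality of $l_\tau$ and $l_{P_\tau}$ does not follow from uniqueness of the relation among $\p_1,\dots,\p_{n+1}$ alone. You also need the primitive relation of $P_\tau$ to be supported on $\{\p_1,\dots,\p_{n+1}\}$, i.e., that the minimal cone of $\fan$ containing $\sum_{i \in I} \p_i$ has all its generators among $\p_1,\dots,\p_{n+1}$; in the paper this is exactly what Reid's results \eqref{reid1} and \eqref{reid2} supply, and those are proved only for extremal walls. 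This gap does not affect your overall conclusion, which is the correct one: the statement is a conjecture, your attempted strategy stalls exactly where the paper says any known strategy stalls, and acknowledging that is the honest outcome here.
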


Besides the evidence provided by examples, we also have the
theoretical result of Casagrande \cite[Thm.\ 5.6]{casagrande}, which
states that if a smooth complete non-projective toric variety $X$ has
a toric blow-up $Y \to X$ with $Y$ projective, then
Conjecture~\ref{casagrande} holds for $X$.

We stated Conjecture~\ref{casagrande} for the simplicial case because
of the following result.

\begin{proposition}
\label{if_conjecture}
Assume that Conjecture~\ref{casagrande} is true.  Let $X$ be a
non-simplicial toric variety of a fan $\fan$ in $N_\R \cong \R^n$ such
that $|\fan|$ is convex of dimension $n$.  Then:
\begin{align*}
\CPL(\Sigma) = \big\{ &\phi \in \PL(\fan) \mid \phi(\p_1) +
\cdots + \phi(\p_k) \ge \phi(\p_1 + \cdots + \p_k )\\ 
&\text{\rm for all primitive collections}\ \{ \p_1,\dots, \p_k \}
\ \text{\rm for}\ \fan \big\}. 
\end{align*}
Furthemore, every primitive collection for $\fan$ is supported on a
simplicial refinement $\fan'$ of $\fan$ satisfying $\fan'(1) =
\fan(1)$.
\end{proposition}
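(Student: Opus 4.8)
The plan is to repeat the proof of Theorem~\ref{theorem_mori_nonsimp} almost verbatim, substituting Conjecture~\ref{casagrande} for the one ingredient that required quasi-projectivity. First I would invoke the first assertion of Corollary~\ref{cor_simpl_ref_general}, which needs only that $|\fan|$ be convex of dimension $n$, to produce a simplicial refinement $\fan'$ with $\fan'(1)=\fan(1)$. I do not ask for $\fan'$ to be quasi-projective, and indeed it need not be. As before, I would then observe that $\CPL(\fan)=\PL(\fan)\cap\CPL(\fan')$: since $|\fan|=|\fan'|$ and every $\phi\in\PL(\fan)$ automatically lies in $\PL(\fan')$, convexity relative to $\fan$ and relative to $\fan'$ coincide, and this identification uses nothing about quasi-projectivity.

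The key step is that, because $\fan'$ is now simplicial with convex support of dimension $n$, Conjecture~\ref{casagrande} (assumed true) tells us that $\CPL(\fan')$ is cut out by the primitive inequalities for $\fan'$. This is exactly where the proof of Theorem~\ref{theorem_mori_nonsimp} appealed to Theorem~\ref{theorem_mori_primitive}, which needed $\fan'$ quasi-projective; here the conjecture supplies the same conclusion without that hypothesis. From this point the argument is purely combinatorial and can be copied word for word: I would split the primitive collections of $\fan'$ into Type~A (contained in a cone of $\fan$) and Type~B (not so contained), note that Type~A collections yield equalities for $\phi\in\PL(\fan)$ and hence may be discarded, and observe that each Type~B collection for $\fan'$ is a primitive collection for $\fan$. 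Combined with the inclusion \eqref{cone_inclusion}, this gives the desired equality for $\CPL(\fan)$.

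For the final assertion, I would simply cite the first part of Proposition~\ref{prim_lemma}, which already shows that any primitive collection $P$ for $\fan$ is a primitive collection for some simplicial refinement $\fan'$ with $\fan'(1)=\fan(1)$, that is, $P$ is supported on $\fan'$. This statement holds for every fan with convex support of dimension $n$ and is independent of Conjecture~\ref{casagrande}, so it requires no additional work here.

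I do not expect a genuine obstacle. The entire content of the proposition is the bookkeeping observation that quasi-projectivity entered the proof of Theorem~\ref{theorem_mori_nonsimp} at a single point, namely in guaranteeing a quasi-projective simplicial refinement so that the simplicial case of the main theorem applied. Replacing that one appeal by the assumed Conjecture~\ref{casagrande} leaves the remaining steps---the identity $\CPL(\fan)=\PL(\fan)\cap\CPL(\fan')$ and the Type~A/Type~B reduction---completely intact, since both are insensitive to quasi-projectivity. The only care needed is to confirm that the version of Corollary~\ref{cor_simpl_ref_general} used in the first step indeed produces a refinement with $\fan'(1)=\fan(1)$ without assuming quasi-projectivity, which is precisely its first assertion.
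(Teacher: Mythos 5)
Your proposal is correct and follows the paper's own proof essentially verbatim: both arguments rerun the proof of Theorem~\ref{theorem_mori_nonsimp} with Corollary~\ref{cor_simpl_ref_general} (whose first assertion needs no quasi-projectivity) supplying the refinement $\fan'$ with $\fan'(1)=\fan(1)$, Conjecture~\ref{casagrande} replacing the appeal to Theorem~\ref{theorem_mori_primitive} for the description of $\CPL(\fan')$, and the Type~A/Type~B reduction carried over unchanged. Your handling of the final assertion via the first part of Proposition~\ref{prim_lemma} (equivalently, its proof through part~(1) of Theorem~\ref{thrm_simpl_ref_general}, which is independent of quasi-projectivity) is exactly the paper's argument as well.
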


\begin{proof}
The first part of the proposition follows since the proof of
Theorem~\ref{theorem_mori_nonsimp} (the non-simplicial case of our
main theorem) requires the existence of a simplicial refinement
$\fan'$ of $\fan$ with the following properties:
\begin{itemize}
\item $\fan'(1) = \fan(1)$. 
\item $\CPL(\fan')$ is described using primitive inequalities.
\end{itemize}
If we assume Conjecture~\ref{casagrande}, then the second bullet is
automatically true, which means that
Corollary~\ref{cor_simpl_ref_general} gives the needed simplicial
subdivision of $\fan$.

For the final assertion, observe that the proof of
Proposition~\ref{prim_lemma} applies without change since the first
part of Theorem~\ref{thrm_simpl_ref_general} gives the required
simplicial refinement of $\fan$ without needing to assume
quasi-projective.
\end{proof}

One way to think about Conjecture~\ref{casagrande} is that once this
conjecture is proved, the results of this paper would apply to any fan
in $N_\R \cong \R^n$ whose support is convex of dimension $n$---there
would be no requirement that $\fan$ be quasi-projective.  However,
the proofs of the simplicial case given in
Theorem~\ref{theorem_mori_primitive} make essential use of extremal
rays, which exist only in the quasi-projective case.  The result of
Casagrande \cite[Thm.~5.6]{casagrande} mentioned above is a good first
step, but it is likely that some significantly new ideas will be
needed to prove Conjecture~\ref{casagrande} in general.

\section*{Acknowledgements} This paper is based on the second
author's PhD thesis at the University of Massachusetts
\cite{vonRenesse}, written under the direction of Eduardo Cattani and
the first author.  Both authors are very grateful to Eduardo Cattani
for many stimulating discussions.  The first author would also like to
thank Anna Scaramuzza for asking some interesting questions about
primitive collections.  We would also like to thank Cinzia Casagrande
for bringing \cite{kresch} to our attention and for giving a
counterexample to a conjecture in the original version of the paper.

\address{% First Author
Department of Mathematics \\ and Computer Science \\
Amherst College\\
Amherst, MA 01002-5000 \\
USA
}
{dac@cs.amherst.edu}
%%%%%%%%%
\address{% Second Author
Department of Mathematics \\
Westfield State College \\
Westfield, MA 01086 \\
USA
}
{cvonrenesse@wsc.ma.edu}

\end{document}